\pgfplotsset{compat=1.15}
\def\phi{\varphi}
      \def\cO{{\mathcal O}}
\def\ee{{\mathrm e}}
\DeclareMathOperator\dom{{\text{\rm dom\,}}} 
\DeclarePairedDelimiter{\abs}{\lvert}{\rvert}
\DeclareMathOperator{\om}{\Omega}
\DeclareMathOperator{\R}{\mathbb{R}}
\DeclareMathOperator{\diver}{div}
\def\dd{\mathrm{d}}
\renewcommand\setminus{\mathbin{\mathpalette\rsetminusaux\relax}}
\newcommand\rsetminusaux[2]{\mspace{-4mu}
  \raisebox{\rsmraise{#1}\depth}{\rotatebox[origin=c]{-20}{$#1\smallsetminus$}}
 \mspace{-4mu}
}
\newcommand\rsmraise[1]{%
  \ifx#1\displaystyle .8\else
    \ifx#1\textstyle .8\else
      \ifx#1\scriptstyle .6\else
        .45%
      \fi
    \fi
  \fi}
\newcommand*{\rom}[1]{\expandafter\@slowromancap\romannumeral #1@}
\newtheorem{theorem}{Theorem}[section]
\newtheorem*{thm*}{Theorem}
\newtheorem{proposition}[theorem]{Proposition}
\newtheorem{corollary}[theorem]{Corollary}
\newtheorem{lemma}[theorem]{Lemma}
\theoremstyle{definition}
\theoremstyle{definition}
\newtheorem{example}[theorem]{Example}
\numberwithin{equation}{section}
\title[Inequalities between the lowest eigenvalues]{Inequalities between the lowest eigenvalues of Laplacians with mixed boundary conditions}
\author[N.~Aldeghi]{Nausica Aldeghi}
\author[J.~Rohleder]{Jonathan Rohleder}
\address{Matematiska institutionen \\ Stockholms universitet \\
106 91 Stockholm \\
Sweden}
\email{nausica.aldeghi@math.su.se, jonathan.rohleder@math.su.se}
\begin{document}

\begin{abstract}
The eigenvalue problem for the Laplacian on bounded, planar, convex domains with mixed boundary conditions is considered, where a Dirichlet boundary condition is imposed on a part of the boundary and a Neumann boundary condition on its complement. Given two different such choices of boundary conditions for the same domain, we prove inequalities between their lowest eigenvalues. As a special case, we prove parts of a conjecture on the order of mixed eigenvalues of triangles.
\end{abstract}

\keywords{Laplacian, mixed boundary conditions, eigenvalue inequalities, convex domains, Zaremba problem}

\maketitle

\section{Introduction}

On a bounded Lipschitz domain $\Omega \subset \mathbb{R}^2$ we consider the eigenvalue problem for the negative Laplacian $-\Delta_{\Gamma}$ subject to a Dirichlet boundary condition on a non-empty, relatively open part $\Gamma$ of the boundary $\partial \Omega$ and a Neumann boundary condition on its complement $\Gamma^c$, i.e.,
\begin{align*}
 \begin{cases}
 \hspace*{1.6mm} - \Delta u = \lambda u & \text{in}~\Omega, \\
 \hspace{7.4mm} u = 0 & \text{on}~\Gamma, \\
 \nu \cdot \nabla u = 0 & \text{on}~\Gamma^c,
 \end{cases}
\end{align*}
where $\nu$ denotes the unit normal vector field on $\partial \Omega$ pointing outwards. In general, the boundary conditions have to be understood in a weak sense; the operator $-\Delta_\Gamma$ can be defined rigorously via its quadratic form, see Section \ref{sec:preliminaries} below for details. It is self-adjoint in $L^2 (\Omega)$, positive and has a purely discrete spectrum. 

In the present article our aim is to compare the lowest eigenvalue $\lambda_1^\Gamma$ of $- \Delta_\Gamma$ with the corresponding eigenvalue $\lambda_1^{\Gamma'}$ for a different choice $\Gamma'$ of the Dirichlet part of the boundary, for the same domain $\Omega$. In case of an inclusion, $\Gamma' \subset \Gamma$, a variational argument yields
\begin{equation}\label{eq:prelim}
 \lambda_1^{\Gamma'} \leq \lambda_1^\Gamma,
\end{equation}
and the inequality is strict if $\Gamma \setminus \Gamma'$ has a nontrivial interior, see, e.g., \cite[Proposition 2.3]{LR17}; the same is true even for higher eigenvalues. That is, enlarging the part of the boundary where the Dirichlet boundary condition is imposed leads to an increase of the corresponding eigenvalues. 

However, choosing $\Gamma'$ with a smaller length than $\Gamma$ but such that $\Gamma' \setminus \Gamma$ is non-empty does not always guarantee that $\lambda_1^{\Gamma'}$ is smaller than $\lambda_1^\Gamma$. In other words, $\lambda_1^\Gamma$ does not depend monotonously on the length of $\Gamma$; a counterexample is sketched in \cite[Remark 3.3]{S16}. Instead, the validity of \eqref{eq:prelim} depends strongly on the geometry of $\Gamma$ and $\Gamma'$ and how they are located with respect to each other. However, even for very elementary classes of domains this dependence has not been fully understood yet. 

For instance, let $\Omega$ be a triangle and let $\Gamma$ and $\Gamma'$ be two of its sides such that $\Gamma'$ is strictly shorter than $\Gamma$. It has been conjectured by Siudeja in \cite{S16} that $\lambda_1^{\Gamma'} < \lambda_1^\Gamma$ should always hold, but there it was proven only for right triangles satisfying a certain angle restriction. 

In the current paper we are not restricted to triangles but allow general bounded, convex domains with piecewise smooth boundary. We consider two situations:
\begin{itemize}
 \item[(a)] $\Gamma$ and $\Gamma'$ are complementary to each other, i.e.\ the closure of $\Gamma \cup \Gamma'$ is the whole boundary; or
 \item[(b)] $\Gamma$ and $\Gamma'$ do not constitute the whole boundary but the complement of $\Gamma \cup \Gamma'$ has a non-trivial interior.
\end{itemize}
In both cases we assume that $\Gamma'$ is a straight line segment, but we allow $\Gamma$ as well as the possible remainder of the boundary (in case (b)) to be curved. However, we impose conditions on the geometry of $\partial \Omega$ which, in particular, imply that $\Gamma'$ is shorter than $\Gamma$. Under the assumption (a), in Theorem \ref{thm:mainthm1} below we prove the inequality 
\begin{equation}
\label{eq:goal}
\lambda_1^{\Gamma'} < \lambda_1^\Gamma
\end{equation}
if the interior angles of $\partial \Omega$ at the end points of $\Gamma$ are less than $\pi/2$. Under the assumption (b) we are able to prove the same eigenvalue inequality \eqref{eq:goal} if an additional geometric condition on the complement of $\Gamma \cup \Gamma'$ in the boundary is satisfied; see Theorem \ref{thm:mainthm2} below for the details.

An application of our results to triangles resolves a part of Conjecture 1.2 in \cite{S16}. In fact, assume that $\Omega$ is a triangle with shortest side $S$, medium side $M$ and largest side $L$. As consequences of Theorem \ref{thm:mainthm2} and Theorem \ref{thm:mainthm1}, respectively, we get
\begin{equation*}
 \max \{\lambda_1^S, \lambda_1^M \} < \lambda_1^L
\end{equation*}
whenever $\Omega$ is obtuse or right-angled, and
\begin{align*}
 \lambda_1^L < \lambda_1^{S \cup M}
\end{align*}
for any triangle.

The proof of Theorems \ref{thm:mainthm1} and \ref{thm:mainthm2} is variational, using an appropriate directional derivative of an eigenfunction for $\lambda_1^\Gamma$ as test function. This choice of a test function was used before in connection with eigenvalue comparisons for Laplace and Schr\"odinger operators in \cite{LW86,LR17,R20,R21}. In order to estimate the Rayleigh quotient of this test function, we prove an integral identity for the second partial derivatives of Sobolev functions in Lemma \ref{lem:Grisvard} below; it extends a formula from Grisvard's classical book \cite{G85} from polygons to more general, curved domains. 

Finally, we would like to mention that eigenvalues of mixed boundary conditions and inequalities for them have attracted a lot of interest inspired by their importance in the study of, e.g.\ nodal domains of Neumann Laplacian eigenfunctions or the hot spots conjecture, see, e.g., \cite{GWW92,S15}. The limiting behavior of mixed Laplacian eigenvalues if the Dirichlet or Neumann portion shrinks was recently investigated in \cite{FNO21,FNO22}. For further studies of the Laplacian and more general elliptic differential operators with mixed boundary conditions we refer the reader to, e.g., \cite{A11,B94,S02}.

\section{Preliminaries}
\label{sec:preliminaries}

Let us fix some notation and recall some known facts. Throughout the whole paper, $\Omega \subset \R^2$ is a bounded Lipschitz domain, and its boundary~$\partial \Omega$ is piecewise smooth, i.e., $\partial \Omega$ consists of finitely many $C^\infty$-smooth arcs. Note that $\Omega$ being a Lipschitz domain entails that $\partial \Omega$ does not contain any cusp. Moreover, for almost all $x \in \partial \Omega$ there exists a well-defined outer unit normal vector $\nu(x)$ and a unit tangent vector $\tau(x)$ in the direction of positive orientation of the boundary, the points where they are not defined being the corners, i.e.\ the intersections between two consecutive smooth arcs. 
The vector fields $\tau$ and $\nu$ on $\partial \Omega$ are both piecewise smooth with only a finite number of jump discontinuities corresponding to the corners. 

Let us denote by $H^s (\Omega)$, $s > 0$, and $H^s (\partial \Omega)$, $s \in [- 1, 1]$, the Sobolev spaces of order $s$ on $\Omega$ and its boundary, respectively. Here and in the following, $\partial \Omega$ is equipped with the standard surface measure, which we denote by $\sigma$; cf.~\cite{M00}. Recall that there exists a unique bounded trace map from $H^1 (\Omega)$ onto $H^{1/2} (\partial \Omega)$ which continuously extends the mapping 
\begin{align*}
 C^\infty (\overline{\Omega}) \ni u \mapsto u |_{\partial \Omega};
\end{align*}
we usually write $u |_{\partial \Omega}$ for the trace of a function $u \in H^1 (\Omega)$. Moreover, for $u \in H^1 (\Omega)$ satisfying $\Delta u\in L^2(\Omega)$ in the distributional sense we define the normal derivative $\partial_\nu u |_{\partial \Omega}$ of $u$ at $\partial \Omega$ to be the unique element in $H^{- 1/2} (\partial \Omega)$ which satisfies the first Green identity
\begin{align}
\label{eq:Green1}
 \int_\Omega \nabla u \cdot \nabla \overline{v} \, \dd x + \int_\Omega (\Delta u) \overline v \, \dd x = (\partial_\nu u |_{\partial \Omega}, v |_{\partial \Omega} )_{\partial \Omega}, \quad v \in H^1 (\Omega);
\end{align}
here $( \cdot, \cdot)_{\partial \Omega}$ denotes the sesquilinear duality between $H^{1/2} (\partial \Omega)$ and its dual space $H^{- 1/2} (\partial \Omega)$. For sufficiently regular $u$, e.g., $u \in H^2 (\Omega)$, the weakly defined normal derivative $\partial_\nu u |_{\partial \Omega}$ coincides with $\nu \cdot \nabla u |_{\partial \Omega}$ almost everywhere on $\partial \Omega$; in this case the duality in \eqref{eq:Green1} may be replaced by the boundary integral of $\nu \cdot \nabla u |_{\partial \Omega} \overline v |_{\partial \Omega}$.

Let now $\Gamma$ be a relatively open, non-empty subset of $\partial \Omega$. We define $-\Delta_\Gamma$ to be the self-adjoint operator in $L^2(\Omega)$ which corresponds to the non-negative, closed quadratic form 
\begin{equation*}
H^1_{0,\Gamma}(\Omega) \coloneqq \left\{u \in H^1(\Omega) : u|_{\Gamma}=0 \right\} \ni u \mapsto \int_{\Omega} |\nabla u|^2\,\dd x;
\end{equation*}
here $u |_\Gamma = 0$ means that the trace $u |_{\partial \Omega}$ vanishes almost everywhere in $\Gamma$. The operator $- \Delta_\Gamma$ acts as the negative Laplacian, and the functions in its domain satisfy a Dirichlet boundary condition on $\Gamma$ and a Neumann boundary condition in a weak sense on the complement $\Gamma^c = \partial \Omega \setminus \Gamma$; see, e.g., \cite[Section 2]{LR17} for a rigorous discussion of the operator domain. The operator $-\Delta_{\Gamma}$ has a compact resolvent and, hence, its spectrum consists of a discrete sequence of non-negative eigenvalues with finite multiplicities, which converge to $+ \infty$. Its lowest eigenvalue $\lambda_1^\Gamma$ is positive and non-degenerate and can be expressed by the variational principle
\begin{align}
\label{eq:variational}
\lambda_1^\Gamma =\min_{\substack{u \in H^1_{0, \Gamma}(\Omega),\\ u \neq 0}} \frac{\int_{\Omega} |\nabla u|^2\,\dd x}{\int_{\Omega} |u|^2\,\dd x};
\end{align}
$u$ is an eigenfunction of $- \Delta_\Gamma$ corresponding to the eigenvalue $\lambda_1^\Gamma$ if and only if $u$ is a minimizer of \eqref{eq:variational}.

\section{Main results, corollaries, and examples}
\label{sec:mainresults}

In this section we provide the main results of this article and illustrate their assumptions by a number of examples and corollaries. The proofs of the following two theorems are provided in Section \ref{sec:proof} below. In the following first main result we compare the lowest eigenvalues of two configurations that are ``dual'' to each other, i.e.\ Dirichlet and Neumann boundary conditions are interchanged from one to the other.

\begin{theorem}
\label{thm:mainthm1}
Let $\Omega \subset \R^2$ be a bounded, convex domain with piecewise smooth boundary. Furthermore, let $\Gamma, \Gamma' \subset \partial \Omega$ be disjoint, relatively open, non-empty sets such that $\Gamma'$ is a straight line segment and $\overline{\Gamma} \cup \overline{\Gamma'} = \partial \Omega$. Assume that the interior angles of $\partial \Omega$ at both end points of $\Gamma$ are strictly less than $\pi/2$. Then
\begin{equation*}
 \lambda^{\Gamma'}_1 < \lambda^{\Gamma}_1.
\end{equation*}
\end{theorem}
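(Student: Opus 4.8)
The plan is to use the variational principle \eqref{eq:variational} for $\lambda_1^{\Gamma'}$ together with a carefully chosen test function built from an eigenfunction $u$ associated with $\lambda_1^\Gamma$. Following the strategy indicated in the introduction, I would take the test function to be a directional derivative $v = \partial_e u = e \cdot \nabla u$, where $e \in \mathbb{R}^2$ is a unit vector. The natural choice is to let $e$ be parallel to the straight segment $\Gamma'$. Since $\Omega$ is convex and $u$ solves $-\Delta u = \lambda_1^\Gamma u$ with $u \in H^1(\Omega)$, elliptic regularity up to the boundary (away from corners) combined with convexity gives $u \in H^2(\Omega)$, so $v \in H^1(\Omega)$ and the Rayleigh quotient of $v$ makes sense. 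The key points to verify are: (i) $v \in H^1_{0,\Gamma'}(\Omega)$, i.e.\ $v$ vanishes on $\Gamma'$ in the trace sense; (ii) $v \not\equiv 0$; and (iii) the Rayleigh quotient of $v$ is strictly less than $\lambda_1^\Gamma$.

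\textbf{The boundary condition on $\Gamma'$.}
For (i): on $\Gamma'$ the function $u$ satisfies the Neumann condition $\partial_\nu u = 0$ (since $\Gamma'$ lies in the Neumann part $\Gamma^c$ of the boundary for $-\Delta_\Gamma$). As $\Gamma'$ is a straight segment, its tangential direction is the constant vector $e$, so along $\Gamma'$ one can decompose $\nabla u = (\partial_\nu u)\nu + (\partial_\tau u)\tau$ with $\tau = \pm e$; hence $e \cdot \nabla u|_{\Gamma'} = \pm\,\partial_\tau u|_{\Gamma'}$, the tangential derivative of $u$ along $\Gamma'$. But $u$ itself is \emph{not} prescribed to vanish on $\Gamma'$, so $\partial_\tau u$ need not vanish there — this means the naive choice $v = \partial_e u$ does \emph{not} obviously lie in $H^1_{0,\Gamma'}(\Omega)$. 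I expect the correct test function is instead the \emph{normal} derivative direction near $\Gamma$, or more precisely: one chooses $e$ so that on $\Gamma'$ (straight) one has $e = \nu$ constant, so that $v = e\cdot\nabla u = \partial_\nu u = 0$ on $\Gamma'$ by the Neumann condition. Since $\Gamma'$ is a line segment, its outward normal $\nu$ is a genuine constant vector, so this choice is legitimate and gives (i) immediately. With this $e$, (ii) follows because if $\partial_\nu u \equiv 0$ on all of $\partial\Omega$ in this direction one contradicts $u$ being a nonconstant Dirichlet-type eigenfunction (one argues $u$ would be constant, impossible since $u|_\Gamma = 0$ and $u \not\equiv 0$).

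\textbf{Estimating the Rayleigh quotient — the main obstacle.}
For (iii), one computes, using $-\Delta u = \lambda_1^\Gamma u$ and $v = \partial_e u$, that $-\Delta v = \lambda_1^\Gamma v$ in $\Omega$ as well, so formally $\int_\Omega |\nabla v|^2 = \lambda_1^\Gamma \int_\Omega |v|^2 + (\text{boundary term})$, where the boundary term is $(\partial_\nu v|_{\partial\Omega}, v|_{\partial\Omega})_{\partial\Omega}$. The whole game is to show this boundary term is $\le 0$, with strict inequality somewhere. On $\Gamma'$ the term vanishes since $v|_{\Gamma'} = 0$. On $\Gamma$, where $u|_\Gamma = 0$, one has $\nabla u \parallel \nu$ and $v = \partial_e u = (e\cdot\nu)\,\partial_\nu u$; differentiating the identity $u|_\Gamma = 0$ tangentially and using the curvature of $\Gamma$ one rewrites $\partial_\nu v$ on $\Gamma$ in terms of $\partial_\nu u$, the curvature $\kappa$ of $\partial\Omega$, and $e$. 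This is exactly where Lemma \ref{lem:Grisvard} (the Grisvard-type integral identity for second derivatives, extended to curved boundaries) enters: it converts $\int_\Omega$ of the Hessian quadratic form into a boundary integral involving $\kappa$, $\partial_\nu u$, and the geometry, plus corner contributions. The hardest part will be controlling these corner contributions: at the two endpoints of $\Gamma$ the interior angle is assumed $< \pi/2$, and this hypothesis should make each corner term have a favorable sign (heuristically, an acute angle forces the relevant cross-term to be nonpositive), while at corners interior to $\Gamma^c$ or between $\Gamma$ and $\Gamma'$ one must check the contributions either vanish or also have the right sign. Assembling all pieces — the curvature term on $\Gamma$ (nonpositive by convexity, $\kappa \ge 0$), the corner terms (controlled by the angle hypothesis), and the vanishing on $\Gamma'$ — yields $\int_\Omega |\nabla v|^2 \le \lambda_1^\Gamma \int_\Omega |v|^2$, hence $\lambda_1^{\Gamma'} \le \lambda_1^\Gamma$ by \eqref{eq:variational}; the strictness comes from the fact that $v$ cannot be a $\lambda_1^{\Gamma'}$-eigenfunction (it changes sign, or fails the Neumann condition on $\Gamma$), so the inequality in the variational principle is strict.
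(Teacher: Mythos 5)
Your final choice of test function ($v = b\cdot\nabla u$ with $b$ the constant outer unit normal of $\Gamma'$) and the overall architecture (Rayleigh quotient of $v$ in $H^1_{0,\Gamma'}$, controlled via the Grisvard-type identity of Lemma \ref{lem:Grisvard} and the sign of the curvature) match the paper's proof. However, there are two genuine gaps. First, your regularity claim is wrong as stated: for mixed Dirichlet--Neumann problems, convexity alone does \emph{not} give $u \in H^2(\Omega)$ -- the solution generically has an $r^{1/2}$-type singularity at a Dirichlet--Neumann transition corner whenever the interior angle there is $\geq \pi/2$. The hypothesis that the angles at the endpoints of $\Gamma$ are strictly less than $\pi/2$ is used in the paper precisely and only to secure $\dom(-\Delta_\Gamma)\subset H^2(\Omega)$ (Proposition \ref{prop:regularity}), so that $v\in H^1(\Omega)$ and Lemma \ref{lem:Grisvard} applies. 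You instead deploy the angle hypothesis to control signs of ``corner contributions'' in the boundary integral; but in the paper's identity there are no such corner terms at all -- the condition that $u$ satisfies a Dirichlet or Neumann condition on each smooth arc kills them, and the remaining boundary integral $\int_{\partial\Omega}\lambda_1^\Gamma u(b\cdot\nabla u)(b\cdot\nu)\,\dd\sigma$ vanishes identically in the setting of Theorem \ref{thm:mainthm1} because $u=0$ on $\Gamma$ and $b\cdot\nabla u=\partial_\nu u=0$ on $\Gamma'$. So your heuristic about acute angles forcing favorable signs is aimed at a term that is not there, while the place where the hypothesis is genuinely needed is left uncovered.

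Second, the strictness of the inequality is asserted rather than proved. Saying that $v$ ``changes sign, or fails the Neumann condition on $\Gamma$'' is exactly what must be established, and it is the most delicate part of the paper's argument: assuming $\lambda_1^{\Gamma'}=\lambda_1^\Gamma$, the paper splits into the case where $\partial\Omega$ has a genuinely curved piece (then \eqref{eq:curvatureZero} forces $\nabla u=0$ on that piece and Lemma \ref{lem:continuation principle} gives $u\equiv 0$) and the polygonal case (where one shows $\partial_\nu v=0$ on the side $\Gamma_1$ adjacent to $\Gamma'$ implies $\partial_\nu u$ is constant on $\Gamma_1$, then uses that the non-perpendicular corner $P$ is a critical point of $u$ to conclude $\partial_\nu u=0$ on $\Gamma_1$, and again invokes unique continuation). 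Without some version of this argument you only obtain $\lambda_1^{\Gamma'}\leq\lambda_1^\Gamma$. A smaller issue: your justification that $v\not\equiv 0$ is garbled ($b\cdot\nabla u\equiv 0$ does not make $u$ constant, only constant along lines parallel to $b$; one must combine this with $u|_\Gamma=0$ and the geometry of $\Gamma$), though the intended idea is recoverable.
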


For the special case of non-curved polygons, the statement of Theorem \ref{thm:mainthm1} refines \cite[Theorem 3.1]{R20}, where a non-strict eigenvalue inequality was shown. An example of a domain to which Theorem \ref{thm:mainthm1} applies is shown in Figure \ref{figure1mainthm}. 

\begin{figure}[h]
\begin{tikzpicture}[scale=0.7]
\pgfsetlinewidth{0.8pt}
\node[white] (A) at (0,4) {};
\node[white] (B) at (0,0) {};
\node[white] (C1) at (2.5,3) {};
\node[white] (C2) at (2,1) {};
\draw (A.center) to node[black][left]{$\Gamma'$} (B.center);
\draw[name path=smooth,black] plot[smooth] coordinates {(A) (C1) (C2) (B)};
\node at (barycentric cs:A=1,B=1,C1=1,C2=1){$\Omega$};
\path[name path=middle] (0,2)--(5,2);
\fill [name intersections={of=middle and smooth}](intersection-1) node[right] {$\Gamma$};
\end{tikzpicture} 
\caption{For this domain, $\lambda_1^{\Gamma'} < \lambda_1^\Gamma$ according to Theorem \ref{thm:mainthm1}.}
\label{figure1mainthm}
\end{figure}
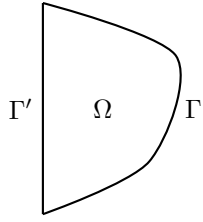

We point out the implications of Theorem \ref{thm:mainthm1} on triangular domains in the following corollary.

\begin{corollary}
Assume that $\Omega$ is a triangle, whose sides are denoted by $S, M$ and $L$ in non-decreasing order according to their lengths. Then the following assertions hold.
\begin{enumerate}
 \item If both angles enclosing $S$ are strictly less than $\pi/2$, then $\lambda_1^S < \lambda_1^{M \cup L}$.
 \item If both angles enclosing $M$ are strictly less than $\pi/2$, then $\lambda_1^M < \lambda_1^{S \cup L}$.
 \item In any case, $\lambda_1^L < \lambda_1^{S \cup M}$.
\end{enumerate}
\end{corollary}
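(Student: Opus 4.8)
The plan is to derive all three assertions as direct consequences of Theorem \ref{thm:mainthm1}, since a triangle is a bounded, convex domain with piecewise smooth (indeed polygonal) boundary, and each side is a straight line segment. In each of the three cases I will take $\Gamma'$ to be the short side in question and $\Gamma$ to be the union of the (interiors of the) other two sides; then $\Gamma$ and $\Gamma'$ are disjoint, relatively open, non-empty, $\Gamma'$ is a straight line segment, and $\overline{\Gamma} \cup \overline{\Gamma'} = \partial\Omega$, so the structural hypotheses of Theorem \ref{thm:mainthm1} are met automatically. What remains in each case is exactly the angle hypothesis: the interior angles of $\partial\Omega$ at the two end points of $\Gamma$ must be strictly less than $\pi/2$. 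The end points of $\Gamma = (\text{the other two sides})^\circ$ are precisely the two vertices of the triangle that are \emph{not} shared by those two sides — equivalently, the two vertices at the ends of $\Gamma'$, i.e.\ the two angles enclosing $\Gamma'$.

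\medskip

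For assertion (i), set $\Gamma' = S^\circ$ and $\Gamma = (M \cup L)^\circ$; the two end points of $\Gamma$ are the vertices at the ends of $S$, so the hypothesis of Theorem \ref{thm:mainthm1} is exactly that both angles enclosing $S$ are strictly less than $\pi/2$, which is assumed. Theorem \ref{thm:mainthm1} then yields $\lambda_1^S < \lambda_1^{M \cup L}$. Assertion (ii) is identical with $S$ replaced by $M$ and $M \cup L$ replaced by $S \cup L$. For assertion (iii), set $\Gamma' = L^\circ$ and $\Gamma = (S \cup M)^\circ$; now the end points of $\Gamma$ are the two vertices at the ends of $L$, i.e.\ the two smallest angles of the triangle. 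Here I must check that these two angles are automatically both strictly less than $\pi/2$: a triangle has at most one angle that is $\geq \pi/2$, and it is the angle opposite the longest side, hence the angle \emph{not} adjacent to $L$. Therefore the two angles adjacent to $L$ are the two angles opposite $S$ and $M$; since $S \leq M \leq L$ and the angle opposite the longest side is the largest, both of these are at most the angle opposite $L$, and at least one of them is strictly less than $\pi/3$ — in any event each is strictly less than $\pi/2$ because the sum of the other two angles is $\pi$ minus the third, and the third (opposite $L$) is the largest, so each of the two adjacent to $L$ is strictly smaller than $\pi/2$ unless the triangle were degenerate. Thus the angle hypothesis holds unconditionally for (iii), and Theorem \ref{thm:mainthm1} gives $\lambda_1^L < \lambda_1^{S \cup M}$.

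\medskip

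There is essentially no analytic obstacle here; the only point requiring a moment's care — and the one I would write out explicitly — is the elementary geometric fact used in (iii), namely that in any (nondegenerate) triangle the two angles adjacent to the longest side are both strictly acute. This follows because the largest angle of a triangle lies opposite its longest side, so that largest angle is the one not adjacent to $L$; the two angles adjacent to $L$ are therefore not the largest, and since their sum with the largest angle is $\pi$ and the largest angle is positive, each of the two adjacent angles is strictly less than $\pi - (\text{largest angle}) \le \pi - (\text{each of them})$, forcing each to be $< \pi/2$. One should also note in passing that the hypotheses of (i) and (ii) can fail (e.g.\ for an obtuse triangle whose obtuse angle is adjacent to $S$), which is why those two statements are conditional while (iii) is not; no further argument is needed for (i) and (ii) beyond invoking Theorem \ref{thm:mainthm1}.
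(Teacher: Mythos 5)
Your proposal is correct and follows exactly the paper's own route: assertions (i) and (ii) are immediate applications of Theorem \ref{thm:mainthm1} with $\Gamma'$ the side in question and $\Gamma$ the (relatively open, connected) union of the other two sides, and (iii) reduces to the elementary fact that the two angles adjacent to the longest side of a triangle are strictly acute, which you verify correctly. No discrepancies worth noting.
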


The first two assertions follow immediately from the previous theorem. For the third one, it suffices to note that the angle between the longest side of a triangle and any other of its sides is always below $\pi/2$. The assertion (iii) confirms a part of the aforementioned Conjecture 1.2 in \cite{S16}.

In the following second main result we compare the lowest eigenvalues on a domain $\Omega$ for two configurations: on the one hand, the Dirichlet boundary condition being imposed on an open subset $\Gamma$ of the boundary, on the other hand a Dirichlet boundary condition on an open set $\Gamma'$ that is disjoint with $\Gamma$, but such that $\overline \Gamma \cup \overline {\Gamma'} \subsetneq \partial \Omega$, i.e.\ the two Dirichlet portions for which we compare the lowest eigenvalue do not exhaust the whole boundary. To this end we have to impose the geometric assumption \eqref{eq:miraculous} below on the remaining part $\partial \Omega \setminus (\overline \Gamma \cup \overline {\Gamma'})$ of the boundary. For the following theorem, recall that $\tau$ denotes the unit tangential vector field (along positive orientation of the boundary) and $\nu$ denotes the outer unit normal field.

\begin{theorem}
\label{thm:mainthm2}
Let $\Omega \subset \R^2$ be a bounded, convex domain with piecewise smooth boundary. Furthermore, let $\Gamma, \Gamma' \subset \partial \Omega$ be disjoint, relatively  open, non-empty sets such that $\Gamma'$ is a straight line segment and $\Gamma$ is connected. Assume that the interior angles of $\partial \Omega$ at both end points of $\Gamma$ are strictly less than $\pi/2$. Let $b$ denote the constant outer unit normal vector of $\Gamma'$, and assume further that 
\begin{align}
\label{eq:miraculous}
\text{the function}~(b \cdot \tau)(b \cdot \nu)~\text{is non-increasing along}~\partial \Omega \setminus \Gamma
\end{align}
according to positive orientation. Then
\begin{equation}
\label{eq:maininequalityAgain}
 \lambda^{\Gamma'}_1 < \lambda^{\Gamma}_1.
\end{equation}
\end{theorem}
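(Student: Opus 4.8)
The plan is to follow the variational strategy outlined in the introduction: take an eigenfunction $u$ of $-\Delta_\Gamma$ for the lowest eigenvalue $\lambda_1^\Gamma$, form a well-chosen directional derivative of $u$, and use it as a test function in the Rayleigh quotient \eqref{eq:variational} for $-\Delta_{\Gamma'}$. Since $\Gamma'$ is a straight line segment with constant outer unit normal $b$, the natural candidate is $v \coloneqq b \cdot \nabla u = \partial_b u$. The point of this choice is twofold. First, because $\Delta u = -\lambda_1^\Gamma u$ with $u$ sufficiently regular (elliptic regularity up to the convex, piecewise-smooth boundary away from the corners gives $u \in H^2$; the angle conditions at the endpoints of $\Gamma$ are precisely what guarantee enough regularity there), one has $-\Delta v = \lambda_1^\Gamma v$ as well, so $\int_\Omega |\nabla v|^2 = \lambda_1^\Gamma \int_\Omega |v|^2 + (\text{boundary terms})$ after integrating by parts. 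Second, on $\Gamma'$ the Neumann condition for $-\Delta_\Gamma$ reads $\partial_\nu u = b\cdot\nabla u = v = 0$, so $v$ lies in $H^1_{0,\Gamma'}(\Omega)$ provided we also check $v|_{\Gamma'} = 0$ in the trace sense — which holds since $\Gamma'$ is flat and $\partial_\nu u|_{\Gamma'}=0$. Hence $v$ is an admissible test function for $\lambda_1^{\Gamma'}$, and
\begin{equation*}
\lambda_1^{\Gamma'} \le \frac{\int_\Omega |\nabla v|^2\,\dd x}{\int_\Omega |v|^2\,\dd x} = \lambda_1^\Gamma + \frac{(\text{boundary terms})}{\int_\Omega |v|^2\,\dd x},
\end{equation*}
so it remains to show the boundary terms are $\le 0$, with strict inequality somewhere.

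The boundary terms are computed via the integral identity of Lemma \ref{lem:Grisvard} (the extension of Grisvard's formula to curved domains): applying it to $u$ one expands $\int_\Omega |\nabla v|^2 = \int_\Omega |\nabla (b\cdot\nabla u)|^2$ in terms of $\int_\Omega (\Delta u)^2$, a curvature integral over $\partial\Omega$ involving the second fundamental form (here just the signed curvature $\kappa$ of the boundary arcs), and boundary integrals of first derivatives of $u$ against $b\cdot\nu$ and $b\cdot\tau$. On $\Gamma'$ everything vanishes because $v = \partial_\nu u = 0$ and $\Gamma'$ is straight ($\kappa = 0$). On $\Gamma$ itself the Dirichlet condition $u=0$ forces the tangential derivative to vanish, so $\nabla u = (\partial_\nu u)\nu$ there, and the relevant boundary integrand becomes a multiple of $(\partial_\nu u)^2 (b\cdot\nu)(b\cdot\tau)$ plus a curvature term $-\kappa (\partial_\nu u)^2 (b\cdot\nu)^2$ or similar; convexity ($\kappa \ge 0$) handles the curvature term with the correct sign, and in case (a), where $\partial\Omega = \overline\Gamma\cup\overline{\Gamma'}$, an integration by parts along $\Gamma$ turns the remaining term into a boundary-of-boundary contribution supported at the two endpoints of $\Gamma$, whose sign is controlled by the hypothesis that the interior angles there are $<\pi/2$ (this makes $b\cdot\nu$ and $b\cdot\tau$ have a definite, favorable sign pattern at those corners). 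For Theorem \ref{thm:mainthm2} the boundary splits as $\Gamma \cup \Gamma' \cup R$ with $R = \partial\Omega\setminus(\overline\Gamma\cup\overline{\Gamma'})$ nonempty; on $R$ neither boundary condition is trivial, but there $u$ and $\partial_\nu u$ need not vanish, so after integration by parts along $\partial\Omega\setminus\Gamma$ one picks up, besides the endpoint terms handled as before, a bulk term on $R$ proportional to the derivative of $(b\cdot\tau)(b\cdot\nu)$ along the boundary — and assumption \eqref{eq:miraculous}, that this function is non-increasing along $\partial\Omega\setminus\Gamma$, is exactly what makes that term have the right sign.

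Once the boundary terms are shown to be $\le 0$, strictness of the inequality \eqref{eq:maininequalityAgain} follows by the usual argument: if equality held, $v$ would be a minimizer of the Rayleigh quotient for $\lambda_1^{\Gamma'}$, hence an eigenfunction, and in particular $v = b\cdot\nabla u$ would not vanish identically; but then tracing back through the equality cases forces the boundary integrand to vanish identically, and combining $u=0$, $b\cdot\nabla u = 0$ on an arc (or the contradiction with the strict angle inequality at an endpoint) with the unique continuation principle for $-\Delta - \lambda_1^\Gamma$ forces $u \equiv 0$, a contradiction. One should also separately rule out $v \equiv 0$: if $b\cdot\nabla u \equiv 0$ then $u$ depends only on the coordinate along $\Gamma'$, and the mixed boundary conditions on a genuinely two-dimensional convex $\Omega$ make this impossible for the ground state.

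\emph{Main obstacle.} The delicate point is the regularity of $u$ up to the boundary, in particular near the endpoints of $\Gamma$ (the transition points between Dirichlet and Neumann data), and the rigorous justification that $v = b\cdot\nabla u$ genuinely lies in $H^1(\Omega)$ with the claimed trace $0$ on $\Gamma'$ — i.e. that the Zaremba-type corner singularities are mild enough under the $<\pi/2$ angle hypothesis. Closely tied to this is verifying that the curved-domain Grisvard identity of Lemma \ref{lem:Grisvard} applies to $u$ (it will be stated for a suitable class of $H^2$-type functions, and $u$ must be shown to belong to it, possibly via an approximation/cutoff argument near the corners), and bookkeeping the endpoint contributions so that the sign conditions on $b\cdot\nu$, $b\cdot\tau$ at the corners of $\Gamma$ translate cleanly into the desired inequality. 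The interior-angle hypothesis is used in two distinct ways — for regularity and for the sign of the endpoint terms — and keeping these roles separate is where the care is needed.
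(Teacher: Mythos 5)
Your overall strategy coincides with the paper's: the test function $v = b\cdot\nabla u$, Proposition \ref{prop:regularity} for $v\in H^1_{0,\Gamma'}(\Omega)$, the curved-domain Grisvard identity of Lemma \ref{lem:Grisvard}, convexity ($\kappa\le 0$) for the curvature term, and an integration by parts of $u\,\partial_\tau u=\tfrac12\partial_\tau(u^2)$ along $\partial\Omega\setminus\Gamma$ so that the hypothesis \eqref{eq:miraculous} controls both the bulk terms $-\int_{\Sigma_j}u^2\partial_\tau t_j$ and the corner jumps. However, your bookkeeping of the boundary terms on $\Gamma$ is off in a way that matters. In the identity the paper actually derives, \eqref{eq:fundeq}, the only non-curvature boundary term is $\lambda_1^\Gamma\int_{\partial\Omega}u\,(b\cdot\nabla u)(b\cdot\nu)\,\dd\sigma$, which carries a factor of $u$ and therefore vanishes identically on $\Gamma$; no term of the form $(\partial_\nu u)^2(b\cdot\nu)(b\cdot\tau)$ on $\Gamma$ appears, no integration by parts along $\Gamma$ is needed, and the angle condition at the end points of $\Gamma$ plays no role in the sign of any endpoint contribution (the contributions at $P_0$ and $P_N$ vanish because $u(P_0)=u(P_N)=0$). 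The angle hypothesis is used only for the $H^2$ regularity, for $v\not\equiv 0$, and in the strictness argument. Your proposed expansion of $\int_\Omega|\nabla v|^2$ directly (via $-\Delta v=\lambda_1^\Gamma v$ and $\int_{\partial\Omega}v\,\partial_\nu v$) would require boundary traces of second derivatives of $u$, which $H^2$ regularity does not supply; the paper's detour through $\lambda_1^\Gamma\int|v|^2$ in \eqref{eq:half}--\eqref{eq:otherhalf} exists precisely to avoid this.

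The more substantial gap is in the strictness argument. Your plan --- equality forces the boundary integrands to vanish, producing an arc on which $u=\partial_\nu u=0$, whence unique continuation --- covers the cases where $\Gamma$ is curved, or is a straight segment not parallel to $\Gamma'$ (where one additionally uses that the transition corner is a critical point of $u$). But it cannot close the case where $\Gamma$ is a straight segment parallel to $\Gamma'$ (e.g.\ the trapezium of Corollary \ref{cortrapeze}): there the curvature integral carries no information and the vanishing of \eqref{eq:boundIntZero} does not yield an arc where both $u$ and $\partial_\nu u$ vanish. The paper handles this case by a different mechanism: equality makes $v=b\cdot\nabla u$ a ground state of $-\Delta_{\Gamma'}$, hence of one sign by Courant's theorem, so $u$ is strictly monotone in the direction $b$; on the other hand the vanishing of the corner term at an end point $P_j$ of $\Gamma'$ forces $u(P_j)=0$, and the line through $P_j$ in direction $b$ meets $\Gamma$ where $u$ also vanishes --- contradicting monotonicity. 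This idea is absent from your sketch and is needed to obtain the strict inequality \eqref{eq:maininequalityAgain} in full generality.
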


Some comments are in order. As $\Omega$ is convex with a piecewise smooth boundary and $\Gamma$ is a connected subset of $\partial \Omega$, $\partial \Omega \setminus \Gamma$ consists, except for the corners, of finitely many relatively open smooth arcs $\Sigma_1, \dots, \Sigma_N$, which we can enumerate following the boundary in positive orientation from one end point of $\Gamma$ to the other. We denote the corner points of $\partial \Omega$, i.e.\ the points where two consecutive smooth pieces of $\partial \Omega$ meet, by $P_0, P_1, \dots, P_N$ so that $P_0$ and $P_N$ are the end points of $\Gamma$, and $P_{j-1}$ and $P_j$ are the end points of $\Sigma_j$, see Figure~\ref{fig:notationHelp}. 
\begin{figure}[h]
\begin{tikzpicture}[scale=0.4]
\pgfsetlinewidth{0.8pt}
\node[circle,fill=black,inner sep=0pt,minimum size=2pt,label=below left:{\small {$P_0$}}] (P0) at (9.38,7.24) {};
\node[circle,fill=black,inner sep=0pt,minimum size=2pt,label=below:{\small {$P_1$}}] (P1) at (6.9,6.8) {};
\node[circle,fill=black,inner sep=0pt,minimum size=2pt,label=below:{\small {$P_2$}}] (P2) at (5,6) {};
\node[circle,fill=black,inner sep=0pt,minimum size=2pt,label=right:{\small {$P_3$}}] (P3) at (3,4) {};
\node[circle,fill=black,inner sep=0pt,minimum size=2pt,label=right:{\small {$P_4$}}] (P4) at (3,0) {};
\node[circle,fill=black,inner sep=0pt,minimum size=2pt,label=right:{\small {$P_5$}}] (P5) at (4.22,-1.64) {};
\node[white] (G) at (5.26,-2.36) {}; 
\node[white] (H) at (6.32,-2.76) {}; 
\node[white] (J) at (7.5,-2.98) {}; 
\node[circle,fill=black,inner sep=0pt,minimum size=2pt,label=above left:{\small {$P_6$}}] (P6) at (8.84,-3.18) {};
\begin{pgfonlayer}{bg} 
\pgfsetlinewidth{0.8pt}
\draw[gray](P0.center) to (P1.center);
\node at (8.14,8) {$\Sigma_1$};
\draw[gray](P1.center) to (P2.center);
\node at (5.7,7.45) {$\Sigma_2$};
\draw[gray](P2.center) to node[black][above left]{$\Sigma_3$} (P3.center);
\draw (P3.center) to node[black][left]{$\Gamma'\!=\!\Sigma_4$} (P4.center);
\draw[gray](P4.center) to node[black][below left]{$\Sigma_5$} (P5.center);
\draw[gray] plot[smooth] coordinates {(P5) (G) (H) (J) (P6)};
\draw(6.32,-2.76) node[black][below]{$\Sigma_6$};
\draw (P6.center) to node[black][right]{$\Gamma$} (P0.center);
\end{pgfonlayer}
\node at (barycentric cs:P0=1,P3=1,P4=1,P6=1) {$\Omega$};
\end{tikzpicture}
\caption{A convex domain with piecewise smooth boundary.}
\label{fig:notationHelp}
\end{figure}
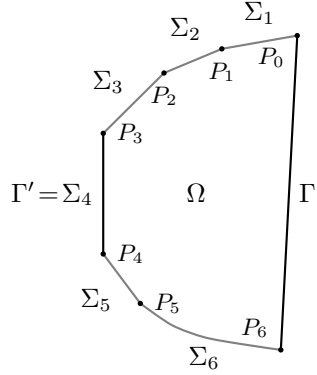
The function $(b \cdot \tau)(b \cdot \nu)$ on $\partial \Omega \setminus \Gamma$ is, like the vector fields $\tau$ and $\nu$, piecewise smooth with only a finite number of jump discontinuities corresponding to the corners. At a corner $P_j$, where two smooth arcs $\Sigma_j$ and $\Sigma_{j+1}$ meet, the monotonicity condition \eqref{eq:miraculous} must be read
\begin{align*}
 \lim_{\Sigma_{j} \ni x \to P_j} (b \cdot \tau (x))(b \cdot \nu (x)) \geq \lim_{\Sigma_{j+1} \ni x \to P_j} (b \cdot \tau (x))(b \cdot \nu (x)).
\end{align*}
By construction, on the straight arc $\Gamma'$,
\begin{equation*}
 (b \cdot \tau)(b \cdot \nu)|_{\Gamma'}=0
\end{equation*}
identically as $b$ is normal to $\Gamma'$. Therefore, the condition \eqref{eq:miraculous} implies $(b \cdot \tau)(b \cdot \nu) \ge 0$ between the end point $P_0$ of $\Gamma$ and the starting point of $\Gamma'$, according to positive orientation of the boundary, and $(b \cdot \tau)(b \cdot \nu) \le 0$ from the end point of $\Gamma'$ to the starting point $P_N$ of $\Gamma$. 

To illustrate the condition \eqref{eq:miraculous}, we provide several examples. First we consider two particularly simple geometries. Within the class of triangles, it turns out that the assumptions of Theorem \ref{thm:mainthm2} are satisfied if and only if the triangle is right or obtuse. The following corollary resolves a part of Conjecture 1.2 in \cite{S16}. For the special case in which $\Omega$ is a right triangle and the smallest angle $\alpha$ satisfies $\pi/6 \leq \alpha \leq \pi/4$, the following was proven in \cite[Theorem 1.1]{S16}. Moreover, a non-strict version of inequality \eqref{eq:maininequalityAgain} was shown for right triangles in \cite[Theorem 4.1]{R20}.

\begin{corollary}
\label{cortriangles}
Let $\Omega$ be an obtuse or right triangle, whose sides are denoted by $S, M$ and $L$ in non-decreasing order according to their lengths. Then
\begin{equation*}
\max{ \{ \lambda_1^{S}, \lambda_1^{M} \} }\ < \lambda_1^{L}
\end{equation*}
holds.
\end{corollary}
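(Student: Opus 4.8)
The plan is to deduce Corollary~\ref{cortriangles} from Theorem~\ref{thm:mainthm2}, applied with $\Gamma$ equal to the longest side $L$ and $\Gamma'$ equal to, in turn, each of the other two sides $S$ and $M$. In both applications $\Gamma=L$ is a single side, hence connected; $\Gamma'$ is a side, hence a straight line segment; the two are disjoint relatively open arcs; and the third side has non-empty relative interior, so $\overline\Gamma\cup\overline{\Gamma'}$ is a proper subset of $\partial\Omega$. The interior angles of $\partial\Omega$ at the two end points of $L$ are the two angles of the triangle not opposite $L$; since the largest angle lies opposite the longest side and a triangle has at most one non-acute angle, both of these are strictly less than $\pi/2$, as Theorem~\ref{thm:mainthm2} requires. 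Hence every hypothesis except the monotonicity condition \eqref{eq:miraculous} is immediate, and checking \eqref{eq:miraculous} in both cases is the only real content.

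To verify \eqref{eq:miraculous} I would place $L$ on the $x$-axis with end points $A=(0,0)$ and $B=(\ell,0)$ and the remaining vertex $C=(c_1,c_2)$ with $c_2>0$, so that the positively oriented boundary runs $A\to B\to C\to A$; then $\partial\Omega\setminus L$ is the concatenation of the two straight arcs $BC$ (first) and $CA$ (second), meeting at $C$. On each of these arcs $\tau$ and $\nu$ are constant, so $(b\cdot\tau)(b\cdot\nu)$ is piecewise constant with at most two values, and it vanishes on whichever of the two arcs coincides with $\Gamma'$, because there $\nu=b$. Consequently \eqref{eq:miraculous} reduces to a single sign inequality on the remaining side: $(b\cdot\tau)(b\cdot\nu)\geq 0$ there if that side precedes $\Gamma'$ in positive orientation, and $(b\cdot\tau)(b\cdot\nu)\leq 0$ there if it follows $\Gamma'$. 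Writing out $b$, $\tau$ and $\nu$ on the relevant side and simplifying, one finds that, up to a strictly positive factor, $(b\cdot\tau)(b\cdot\nu)$ on that side equals $\pm\big(c_1(\ell-c_1)-c_2^2\big)=\mp(A-C)\cdot(B-C)$, with the sign exactly matching the orientation requirement in each of the two configurations. Therefore \eqref{eq:miraculous} holds if and only if $(A-C)\cdot(B-C)\leq 0$, i.e.\ if and only if the interior angle at $C$ is at least $\pi/2$; and since $C$ is the vertex opposite the longest side $L$, this angle is the largest angle of the triangle, so the condition is precisely that $\Omega$ is right or obtuse. (When the angle at $C$ equals $\pi/2$, the function $(b\cdot\tau)(b\cdot\nu)$ is identically zero on $\partial\Omega\setminus L$, which is still non-increasing, and Theorem~\ref{thm:mainthm2} still yields a strict inequality.)

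Once \eqref{eq:miraculous} is checked in both cases, Theorem~\ref{thm:mainthm2} gives $\lambda_1^S<\lambda_1^L$ and $\lambda_1^M<\lambda_1^L$, and taking the maximum proves the corollary. I expect the only delicate point to be the orientation bookkeeping in the middle step: one must determine for each configuration whether the side distinct from $L$ and $\Gamma'$ precedes or follows $\Gamma'$ along the positively oriented boundary --- hence whether \eqref{eq:miraculous} demands a non-negative or a non-positive value of $(b\cdot\tau)(b\cdot\nu)$ there --- and then confirm that the sign coming out of the elementary coordinate computation is the right one. Beyond this, no analytic difficulty is involved; the two configurations are mirror images of one another, so in practice it suffices to treat one of them carefully.
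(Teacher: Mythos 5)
Your proposal is correct and follows essentially the same route as the paper: both apply Theorem~\ref{thm:mainthm2} with $\Gamma=L$ and $\Gamma'\in\{S,M\}$, reducing everything to checking condition~\eqref{eq:miraculous} on the remaining side, which holds precisely because the angle opposite $L$ is at least $\pi/2$. The only difference is cosmetic --- the paper verifies the sign of $(b\cdot\tau)(b\cdot\nu)$ on the third side by a direct geometric observation rather than your explicit coordinate computation.
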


\begin{proof}
To show the inequality $\lambda_1^S < \lambda_1^L$, our aim is to apply Theorem \ref{thm:mainthm2} to $L=\Gamma$ and $S=\Gamma'$, see Figure \ref{fig:ObtuseTriangle}.
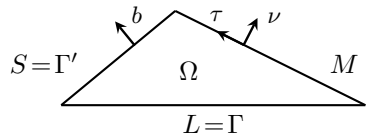
\begin{figure}[h]
\begin{tikzpicture}
\pgfsetlinewidth{0.8pt}
\node[white] (A) at (-6,2) {};
\node[white] (B) at (-4.5,3.25) {};
\node[white] (C) at (-2,2) {};
\draw[name path=sideL] (A.center) to node[black][below]{$L\!=\!\Gamma$} (C.center);
\draw[name path=sideM] (C.center) to (B.center);
\draw[name path=sideS] (B.center) to (A.center);
\node at (-4.32,2.45) {$\Omega$};
\path[name path=newlabels] (-8,2.3)--(0,2.3);
\fill [name intersections={of=newlabels and sideS}](intersection-1) node[above left] {$S\!=\!\Gamma'$};
\fill [name intersections={of=newlabels and sideM}](intersection-1) node[above right] {$M$};
\path[name path=vectors] (-8,2.8)--(0,2.8);
\fill [name intersections={of=vectors and sideS,by=b}](intersection-1);
\node[circle,fill=black,inner sep=0pt,minimum size=0.8pt,sloped] (b') at (b) {};
\node[xshift=0.285cm, yshift=0.09cm] at ($(b'.center)!0.4cm!(b'.129.71513716)$) {\small $b$};
\draw[line width=0.9pt,-stealth] (b'.center) -- ($(b'.center)!0.4cm!(b'.129.71513716)$);
\fill [name intersections={of=vectors and sideM,by=tn}](intersection-1);
\node[circle,fill=black,inner sep=0pt,minimum size=0.8pt,sloped] (tn') at (tn) {};
\draw[line width=0.9pt,-stealth] (tn'.center) -- ($(tn'.center)!0.4cm!(tn'.63.23449906)$) node[right]{\small $\nu$};
\draw[line width=0.9pt,-stealth] (tn'.center) -- ($(tn'.center)!0.4cm!(tn'.153.23449906)$) node[above]{\small $\tau$};
\end{tikzpicture}
\caption{An obtuse triangle; $\max\{\lambda_1^S, \lambda_1^M\} < \lambda_1^L$ holds according to Corollary~\ref{cortriangles}.}
\label{fig:ObtuseTriangle}
\end{figure}
Note that in an obtuse or right triangle, $L$ is the side opposite to the obtuse or right angle, and the interior angles at its end points are smaller than $\pi/2$. Furthermore, denoting by $b$ the unit normal vector of $S$ pointing outwards, due to the obtuse or right angle between $S$ and $M$, both $b \cdot \tau$ and $b \cdot \nu$ are non-negative on $M$, that is, $(b \cdot \tau)(b \cdot \nu) \geq 0$ on $M$. As $(b \cdot \tau)(b \cdot \nu) = 0$ on $S = \Gamma'$, the monotonicity assumption \ref{eq:miraculous} of the theorem is satisfied and we obtain $\lambda_1^S < \lambda_1^L$. The inequality $\lambda^{M}_1 < \lambda^{L}_1$ follows analogously by choosing $M=\Gamma'$.
\end{proof}

Next, we apply Theorem~\ref{thm:mainthm2} to an acute trapezium. Recall that a trapezium is a quadrilateral with at least one pair of parallel sides, called bases, and that it is acute if the two angles adjacent to its longer base are acute; see Figure~\ref{fig:Trapezium}.
\begin{figure}[h]
\begin{tikzpicture}
\pgfsetlinewidth{0.8pt}
\node[white] (LL) at (-2,4) {};
\node[white] (LR) at (-6,4) {};
\node[white] (SL) at (-3.3,5.45) {};
\node[white] (SR) at (-5.18,5.45) {};
\draw (LL.center) to node[black][below]{$L$} (LR.center);
\draw (SL.center) to node[black][above]{$S$} (SR.center);
\draw[gray] (SL.center) to (LL.center);
\draw[gray] (SR.center) to (LR.center);
\node at (barycentric cs:LL=1,LR=1,SL=1,SR=1) {$\Omega$};
\end{tikzpicture}
\caption{An acute trapezium; $\lambda_1^S < \lambda_1^L$ holds according to Corollary \ref{cortrapeze}.}
\label{fig:Trapezium}
\end{figure}
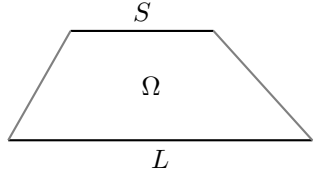

\begin{corollary}
\label{cortrapeze}
If $\Omega$ is an acute trapezium with shorter base $S$ and longer base $L$, then
\begin{equation*}
\lambda_1^S < \lambda_1^L
\end{equation*}
holds.
\end{corollary}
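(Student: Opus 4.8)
The plan is to verify that an acute trapezium satisfies all the hypotheses of Theorem~\ref{thm:mainthm2} with the role of $\Gamma$ played by the longer base $L$ and the role of $\Gamma'$ played by the shorter base $S$, and then simply invoke that theorem. So the work is entirely a matter of checking the geometric conditions.

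First I would set up coordinates so that $L$ lies along the $x$-axis, say $L \subset \{y=0\}$ with the domain $\Omega$ in the upper half-plane, and $S \subset \{y=h\}$ for the height $h>0$. Then $S$ is a straight line segment, $\Gamma = L$ is connected, and since the trapezium is acute the two interior angles at the endpoints of $L$ are strictly less than $\pi/2$; these are exactly the first three assumptions of Theorem~\ref{thm:mainthm2}. The outer unit normal $b$ of $S=\Gamma'$ is the constant vector $b = (0,1)$. It remains to check the monotonicity condition~\eqref{eq:miraculous}: that $(b\cdot\tau)(b\cdot\nu)$ is non-increasing along $\partial\Omega \setminus \Gamma = \partial\Omega\setminus L$ in the positive orientation.

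The set $\partial\Omega\setminus L$ consists of three pieces traversed in order: one leg, then the top side $S$, then the other leg. On $S$ we have $b\cdot\nu = 0$, so the product vanishes there. On each leg the tangent and normal are constant, so the product is a constant on each leg; I need to check that the constant on the first leg is $\ge 0$ and the constant on the second leg is $\le 0$, and that these constants dominate / are dominated by the value $0$ on $S$ appropriately (with the correct inequalities at the two corners where a leg meets $S$). Parametrizing by the angles the legs make with the base and using that $\Omega$ is convex (so the normals point away from $\Omega$), a short computation with $b=(0,1)$ shows: on the first leg, $b\cdot\tau$ and $b\cdot\nu$ are both non-negative (the tangent in positive orientation climbs from $L$ up toward $S$, and the normal points outward, i.e.\ to the left of the trapezium, hence upward-ish), so $(b\cdot\tau)(b\cdot\nu)\ge 0 = (b\cdot\tau)(b\cdot\nu)|_S$; symmetrically on the second leg both are non-positive, giving $(b\cdot\tau)(b\cdot\nu)\ge 0 \ge \text{value on }S$ is reversed — on the descending leg $b\cdot\tau \le 0$ while $b\cdot\nu \ge 0$, so the product is $\le 0 = $ the value on $S$, which is consistent with non-increasing. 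I would spell out the signs carefully using the acuteness of the base angles to guarantee the legs genuinely rise from $L$ to $S$ and that $S$ is the shorter base, then conclude that~\eqref{eq:miraculous} holds.

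The main (minor) obstacle is just bookkeeping the signs of $b\cdot\tau$ and $b\cdot\nu$ on the two legs correctly with respect to the positive (counterclockwise) orientation, and checking the inequalities at the two corners $P$ where a leg abuts $S$ — there the left limit (from the leg) must be $\ge$ the right limit (from $S$, which is $0$) at the first corner, and $0 \ge$ (value on the second leg) at the second corner. Acuteness of the base angles is exactly what makes both leg-constants have the right sign; convexity is what makes the enumeration of $\partial\Omega\setminus L$ into consecutive arcs valid and the normals outward-pointing. Once these are in place, Theorem~\ref{thm:mainthm2} yields $\lambda_1^S = \lambda_1^{\Gamma'} < \lambda_1^{\Gamma} = \lambda_1^L$, which is the claim.
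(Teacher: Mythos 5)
Your proposal is correct and is exactly the argument the paper intends (the paper leaves this corollary to the reader as ``entirely analogous'' to Corollary~\ref{cortriangles}): verify the hypotheses of Theorem~\ref{thm:mainthm2} with $\Gamma=L$, $\Gamma'=S$, using acuteness of the base angles for the angle condition and the sign analysis on the two legs for condition~\eqref{eq:miraculous}. One trivial slip: on $S$ the product $(b\cdot\tau)(b\cdot\nu)$ vanishes because $b\cdot\tau=0$ (indeed $b=\nu$ there, so $b\cdot\nu=1$), not because $b\cdot\nu=0$; this does not affect the argument.
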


The proof of Corollary~\ref{cortrapeze} is entirely analogous to the proof of Corollary~\ref{cortriangles} and is therefore left to the reader.

The following example may illuminate the implications of condition \eqref{eq:miraculous} on the geometry of $\partial \Omega$ in a more general situation.

\begin{example}
\label{ex:slopeExample}
Let $\Omega$ be a domain of the form as given in Figure~\ref{fig:GenericExample}, where $\Gamma$ and $\Gamma'$ are non-neighboring line segments, the part of $\partial \Omega$ between the end point of $\Gamma$ and the starting point of $\Gamma'$ is polygonal consisting of two line segments, and the part between the end point of $\Gamma'$ and the starting point of $\Gamma$ is a smooth arc.
\begin{figure}[h]
\begin{tikzpicture}[scale=0.45]
\pgfsetlinewidth{0.8pt}
\node[white] (P0) at (8.19766,5.29756) {};
\node[white] (P1) at (6.42332,4.08128) {};
\node[white] (P2) at (5,2) {};
\node[white] (P3) at (5,0) {};
\node[white] (P4) at (7.17312,-3.83347) {};
\node[white] (PC) at (5.54558,-2.1919) {};
\draw[gray] (P0.center) to node[black][above left]{$\Sigma_1$} (P1.center);
\draw[gray] (P1.center) to node[black][above left]{$\Sigma_2$} (P2.center);
\draw (P2.center) to node[left]{$\Gamma'\!=\!\Sigma_3$} (P3.center);
\draw (P4.center) to node[right]{$\Gamma$} (P0.center);
\draw[gray] plot[smooth] coordinates {(P3) (PC) (P4)};
\draw(5.54558,-2.1919) node[black][left]{$\Sigma_4$};
\node at (barycentric cs:P0=1,P2=1,P3=1,P4=1) {$\Omega$};
\end{tikzpicture}
\caption{The setting of Example \ref{ex:slopeExample}.}
\label{fig:GenericExample}
\end{figure}
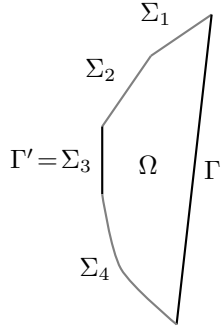
Following the notation introduced above, we call the two line segments, ordered according to positive orientation, $\Sigma_1$ and $\Sigma_2$, and the smooth arc $\Sigma_4$. We assume without loss of generality that $\Omega$ is rotated in such a way that $\Gamma'$ is parallel to the second coordinate axis; then the normal vector of $\Gamma'$ pointing outwards is $b = (-1, 0)^\top$ and the monotonicity condition \eqref{eq:miraculous} reads
\begin{align*}
 \tau_1 \tau_2~\text{is non-increasing along}~\partial \Omega \setminus \Gamma,
\end{align*}
according to positive orientation. In this position, the smooth arc $\Sigma_4$ is the graph of a smooth, convex, non-increasing function $\phi : [0, 1] \to \R$, and the unit tangent vector at a point $x = (t, \phi (t))^\top \in \Sigma_4$ equals
\begin{equation*}
\tau (x) = \frac{1}{\sqrt{1 + (\phi'(t))^2}} \begin{pmatrix} 1  \\ \phi'(t) \end{pmatrix},
\end{equation*}
so that
\begin{equation*}
\tau_1 (x) \tau_2 (x) = \frac{\phi'(t)}{1+(\phi'(t))^2}, \quad x = (t, \phi (t))^\top \in \Sigma_4.
\end{equation*}
This is non-increasing in $t$ if and only if its derivative is non-positive, meaning
\begin{align*}
 \frac{\phi''(t) (1 - {\phi'(t)}^2)}{(1 + {\phi'(t)}^2{)}^2} \leq 0, \quad t \in [0, 1],
\end{align*}
equivalently, $\phi' (t) \leq - 1$ or $\phi'' (t) = 0$ for all $t \in [0, 1]$, where we have employed convexity of $\phi$. Hence, for the smooth arc $\Sigma_4$, condition \eqref{eq:miraculous} translates into
\begin{align*}
\phi' (t) \leq - 1, \quad t \in [0, 1],
\end{align*}
as long as $\Sigma_4$ is not a straight line segment.

For the two straight upper sides $\Sigma_1$ and $\Sigma_2$, by an analogous reasoning, condition \eqref{eq:miraculous} can be expressed in terms of their slopes $m_1 > 0$ and $m_2 > 0$, respectively, as
\begin{align*}
 \frac{m_1}{1 + m_1^2} \geq \frac{m_2}{1 + m_2^2}.
\end{align*}
As the convexity of $\Omega$ only allows $m_1 \leq m_2$, this is true if and only if 
\begin{align*}
 \frac{1}{m_2} \leq m_1~\text{and}~m_2 \geq 1.
\end{align*}
\end{example}

The next example shows the limitations of assumption \eqref{eq:miraculous}.

\begin{example}
Let $\Omega$ be a quadrilateral as given in Figure~\ref{fig:almostTriangle},
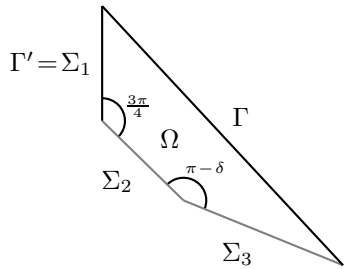
\begin{figure}[h]
\begin{tikzpicture}[scale=0.6]
\pgfsetlinewidth{0.8pt}
\node[white] (P0) at (1,2.53624) {};
\node[white] (P1) at (1,0) {};
\node[white] (P2) at (2.78221,-1.75793) {};
\node[white] (P3) at (6.28,-3.19) {};
\tkzMarkAngle[size=0.5, mark=none](P2,P1,P0)
\tkzLabelAngle[pos=0.8](P2,P1,P0){\tiny $\frac{3 \pi}{4}$}
\tkzMarkAngle[size=0.5, mark=none](P3,P2,P1)
\tkzLabelAngle[pos=0.8](P3,P2,P1){\tiny $\pi \! - \! \delta$}
\draw (P0.center) to node[black][left]{$\Gamma'\!=\!\Sigma_1$} (P1.center);
\draw[gray] (P1.center) to node[black][below left]{$\Sigma_2$}(P2.center);
\draw[gray] (P2.center) to node[black][below left]{$\Sigma_3$}(P3.center);
\draw (P3.center) to node[black][above right]{$\Gamma$} (P0.center);
\node (bary) at (barycentric cs:P0=1,P1=1,P2=1,P3=0.7) {$\Omega$};
\end{tikzpicture}
\caption{Inequality~\eqref{eq:maininequalityAgain} does not hold for this quadrilateral.}
\label{fig:almostTriangle}
\end{figure}
with the slopes of $\Sigma_2$ and $\Sigma_3$, respectively, chosen as
\begin{align*}
 m_2 = - 1 \qquad \text{and} \qquad m_3 = - \tan \left( \frac{\pi}{4} - \delta \right)
\end{align*}
for some sufficiently small $\delta > 0$. In the same way as in the previous example, condition \eqref{eq:miraculous} can be rewritten
\begin{align*}
 - \frac{1}{2} = \frac{m_2}{1 + m_2^2} \geq \frac{m_3}{1 + m_3^2} = - \frac{\tan (\frac{\pi}{4} - \delta)}{1 + \tan^2 (\frac{\pi}{4} - \delta)}.
\end{align*}
However, since the function $x \mapsto \frac{x}{1 + x^2}$ takes its strict global maximum at $x = 1$, with value $\frac{1}{2}$, this is wrong for all small $\delta > 0$. Thus Theorem \ref{thm:mainthm2} cannot be applied. At the same time, sending $\delta$ to zero, $\Omega$ converges to an obtuse triangle, and by Corollary \ref{cortriangles} we have $\lambda_1^{\Gamma'} < \lambda_1^\Gamma$ in the case $\delta = 0$. By continuity, the same inequality holds for all sufficiently small $\delta > 0$. This indicates that the assumption \eqref{eq:miraculous} of Theorem~\ref{thm:mainthm2} is not optimal.
\end{example}

We finally point out that the assumption in Theorem \ref{thm:mainthm2} on the interior angles of $\partial \Omega$ at the end points of $\Gamma$ to be strictly less than $\pi / 2$ cannot be omitted to obtain the strict inequality $\lambda_1^{\Gamma'} < \lambda_1^\Gamma$. For instance, take $\Omega$ to be any rectangle and let $\Gamma$ and $\Gamma'$ be opposite sides. Then, by symmetry, $\lambda_1^{\Gamma} = \lambda_1^{\Gamma'}$, that is, a strict inequality does no longer hold. On the other hand, it may be conjectured that Theorem~\ref{thm:mainthm1} is true even without the assumption on the angles.

\section{Proofs of the main results}
\label{sec:proof}

In this section we prove the main results of this paper. We start by collecting a few useful statements. The first one is a sufficient criterion for functions in the domain $\dom (- \Delta_\Gamma)$ of $- \Delta_\Gamma$ to belong to the Sobolev space $H^2 (\Omega)$. Such regularity holds under assumptions on the angles at the corners at which the transition between Dirichlet and Neumann condition takes place. The following regularity result will be instrumental to the proof of the main result; in fact, it will be used to make sure that the test function constructed there belongs to the Sobolev space $H^1 (\Omega)$.

\begin{proposition}
\label{prop:regularity}
Assume that $\Omega \subset \R^2$ is a bounded Lipschitz domain with piecewise smooth boundary and that $\Gamma, \Gamma^c \subset \partial \Omega$ are relatively open such that $\overline{\Gamma} \cup \overline{\Gamma^c} = \partial \Omega$ holds. Moreover, assume that all angles at which  $\Gamma$ and $\Gamma^c$ meet are strictly less than $\pi/2$ and that the angles at all interior corners of $\Gamma$ and $\Gamma^c$ are less or equal $\pi$. Then 
\begin{align*}
 \dom{(-\Delta_\Gamma)} \subset  H^2(\om).
\end{align*}
\end{proposition}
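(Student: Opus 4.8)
The statement is a local elliptic regularity result, so the natural strategy is to localize near each point of $\overline\Omega$ and patch the pieces together using a smooth partition of unity subordinate to a finite cover. Let $u \in \dom(-\Delta_\Gamma)$, so that $u \in H^1(\Omega)$, $\Delta u = f \in L^2(\Omega)$, and $u$ satisfies $u|_\Gamma = 0$ and $\partial_\nu u|_{\Gamma^c} = 0$ in the weak sense. Cover $\overline\Omega$ by finitely many open sets $U_i$ of three types: (i) $U_i$ compactly contained in $\Omega$; (ii) $U_i$ centered at a boundary point lying in the relative interior of one of the smooth arcs of $\Gamma$ or of $\Gamma^c$, away from all corners; and (iii) $U_i$ centered at a corner of $\partial\Omega$. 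It suffices to show $\varphi u \in H^2(\Omega)$ for each $\varphi$ in a partition of unity subordinate to this cover, since $H^2(\Omega)$ is closed under finite sums and multiplication by smooth cutoffs.

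For type (i) neighbourhoods this is just interior elliptic regularity for $-\Delta$ and is classical. For type (ii) neighbourhoods one is locally looking at the Laplacian on a smooth piece of boundary with either a pure Dirichlet or a pure Neumann condition; after flattening the boundary with a smooth diffeomorphism (which preserves $H^2$ since the arcs are $C^\infty$) one invokes the standard $H^2$ boundary regularity for the Dirichlet resp.\ Neumann problem on a half-ball. The only genuinely delicate case is type (iii), the corners, which split further: corners interior to $\Gamma$ (Dirichlet on both incident arcs), corners interior to $\Gamma^c$ (Neumann on both arcs), and the mixed corners where $\Gamma$ meets $\Gamma^c$. For the pure-Dirichlet and pure-Neumann corners, $H^2$ regularity holds provided the opening angle is $\le \pi$, which is exactly the hypothesis on interior corners of $\Gamma$ and $\Gamma^c$; this is the convexity-type threshold in the classical corner analysis (Grisvard). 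For the mixed corners, the relevant threshold for $H^2$ regularity of the Zaremba problem is that the opening angle be $< \pi/2$, again exactly the stated hypothesis. One can either cite Grisvard's book \cite{G85} (or the literature on mixed/Zaremba problems, e.g.\ the references \cite{A11,B94,S02} already in the bibliography) for the model problem on a sector, or run the standard argument directly: reduce to a sector, use the explicit singular exponents (the singular solutions near a mixed corner of opening $\omega$ behave like $r^{\pi/(2\omega)}$ times an angular function, which lies in $H^2$ near the vertex precisely when $\pi/(2\omega) > 1$, i.e.\ $\omega < \pi/2$; for Dirichlet--Dirichlet or Neumann--Neumann the exponent is $\pi/\omega$, needing $\omega < \pi$, with the borderline $\omega = \pi$ still giving $H^2$), and conclude that no non-$H^2$ singular part can occur in the expansion of $u$ near such a corner.

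The main obstacle, and the step deserving the most care, is therefore the corner analysis at the mixed Dirichlet--Neumann vertices: one must (a) justify that $u$ admits, near each such corner, an asymptotic decomposition into a singular part (a finite linear combination of corner singularity functions) plus an $H^2$ remainder, and (b) check that under the angle restriction $< \pi/2$ the singular part is in fact absent, or at least itself lies in $H^2$. This is where the smoothness (not merely piecewise-linearity) of the arcs requires a small amount of extra work beyond Grisvard's polygonal setting: after straightening the two arcs meeting at the corner by a $C^\infty$ change of variables, the Laplacian becomes a second-order elliptic operator with smooth coefficients on a model sector, and one appeals to the corner regularity theory for such variable-coefficient operators (the singular exponents are unchanged, being determined by the principal part at the vertex). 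Once all three types of neighbourhoods are handled, summing over the partition of unity yields $u \in H^2(\Omega)$, completing the proof.
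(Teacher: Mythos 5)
Your proposal is correct and follows essentially the same route as the paper, which likewise reduces the claim to interior regularity, standard $H^2$ regularity on the smooth Dirichlet and Neumann arcs, Grisvard's corner results in the polygonal case, and a local analysis of the singular exponents at the mixed corners under the angle condition $\omega<\pi/2$. The only cosmetic difference is that at a curved mixed corner the paper transplants the problem to a model sector by a \emph{conformal} map (citing Azzam and Wigley), which preserves harmonicity and the opening angle, whereas you use a general smooth straightening and then invoke the variable-coefficient corner theory; both yield the same singular exponents and hence the same conclusion.
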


If $\Omega$ is a polygon, the statement follows from \cite[Theorem 4.4.3.3 and Lemma 4.4.1.4]{G85}. Else, if $\partial \Omega$ has non-trivial curvature at some point, $H^2$ regularity of the functions in $\dom (- \Delta_\Gamma)$ away from the corners where the transition between Dirichlet and Neumann condition takes place is a standard result, see e.g.\ the survey \cite{KO83}. As for these corners, they can be locally mapped conformally into a sector of the same angle, see \cite{A81} or \cite[Section 5]{W70}.

Secondly, we make the following observation, which is a simple consequence of a unique continuation principle. It will be used to show that the eigenvalue inequalities in our main results are always strict; for the convenience of the reader we provide a short proof.

\begin{lemma}
\label{lem:continuation principle}
Let $\Omega$ be a bounded, connected Lipschitz domain, let $\lambda \in \mathbb{R}$ and let $u \in H^1(\Omega)$ be such that $-\Delta u = \lambda u$ holds in the distributional sense. If $\Lambda \subset \partial \Omega$ is a relatively open, non-empty subset such that $u|_{\Lambda}=0$ and $\partial_{\nu} u|_{\Lambda}=0$ then $u=0$ identically on $\Omega$.
\end{lemma}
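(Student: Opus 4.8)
The plan is to reduce the statement to a standard interior unique continuation result by a reflection-type or direct Cauchy-data argument near the piece $\Lambda$ of the boundary where both the Dirichlet and Neumann traces of $u$ vanish. First I would note that, since $\Lambda$ is relatively open and non-empty and $\partial\Omega$ is Lipschitz, I may pick a point $x_0\in\Lambda$ together with a small ball $B=B(x_0,r)$ such that $\partial\Omega\cap B\subset\Lambda$, so that on $\partial\Omega\cap B$ we have both $u|_{\partial\Omega}=0$ and $\partial_\nu u|_{\partial\Omega}=0$. The goal is to extend $u$ by $0$ across this piece of the boundary to obtain a function $\widetilde u\in H^1(B)$ which still solves $-\Delta\widetilde u=\lambda\widetilde u$ in the distributional sense on all of $B$, after possibly first flattening the boundary with a bi-Lipschitz (or, using piecewise smoothness, $C^\infty$ away from corners) change of variables; since $\Lambda$ lies in the smooth part of $\partial\Omega$ we can assume the boundary piece is smooth, even a hyperplane locally.

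The key step is the verification that the zero-extension $\widetilde u$ is a distributional (weak) solution on the full ball. Here one tests against $\varphi\in C_c^\infty(B)$ and computes $\int_B \nabla\widetilde u\cdot\nabla\varphi = \int_{B\cap\Omega}\nabla u\cdot\nabla\varphi$; applying the first Green identity \eqref{eq:Green1} on $B\cap\Omega$ (whose boundary consists of a piece of $\partial\Omega$, on which both the trace and the normal derivative of $u$ vanish, and a piece of $\partial B$, on which $\varphi$ vanishes together with all derivatives) yields $\int_{B\cap\Omega}\nabla u\cdot\nabla\varphi = -\int_{B\cap\Omega}(\Delta u)\varphi = \lambda\int_{B\cap\Omega}u\varphi = \lambda\int_B\widetilde u\varphi$. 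This shows $-\Delta\widetilde u=\lambda\widetilde u$ weakly on $B$. One should double-check that $\widetilde u\in H^1(B)$: this is exactly where $u|_{\partial\Omega\cap B}=0$ enters, since the zero-extension of an $H^1$ function across a Lipschitz boundary piece is again $H^1$ precisely when its trace vanishes there.

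Once $\widetilde u$ is an $H^1$ weak solution of $-\Delta\widetilde u=\lambda\widetilde u$ on the ball $B$, elliptic regularity makes it smooth, and the classical weak unique continuation principle for $-\Delta-\lambda$ (e.g.\ Aronszajn's theorem, or Hörmander's, valid for second-order elliptic operators with sufficiently regular coefficients) applies: $\widetilde u$ vanishes on the non-empty open set $B\setminus\overline\Omega$, hence $\widetilde u\equiv 0$ on $B$, so $u\equiv 0$ on the open set $B\cap\Omega$. Finally, since $\Omega$ is connected and $u$ is a (smooth, by interior regularity) solution of $-\Delta u=\lambda u$ on $\Omega$ vanishing on a non-empty open subset, one more application of unique continuation, now purely in the interior of $\Omega$, propagates the vanishing to all of $\Omega$, giving $u=0$ identically.

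The main obstacle — really the only subtle point — is the bookkeeping for the zero-extension across the boundary: one must make sure the chosen boundary point $x_0$ lies in the smooth (in particular corner-free) part of $\partial\Omega$ so that locally the domain sits on one side of a smooth hypersurface, confirm that $\widetilde u\in H^1(B)$ using the vanishing Dirichlet trace, and correctly account for the boundary terms in Green's identity using the vanishing Neumann trace; the piecewise smoothness hypothesis on $\partial\Omega$, although only Lipschitz regularity is assumed in the lemma, is convenient here but a purely Lipschitz version also works since a relatively open subset of a Lipschitz boundary contains points near which the boundary is a Lipschitz graph, which suffices for the extension lemma and for the applicability of unique continuation after flattening. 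Everything else is a direct citation of standard elliptic theory.
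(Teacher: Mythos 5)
Your argument is correct and follows essentially the same route as the paper: extend $u$ by zero across a piece of the boundary where both Cauchy data vanish, check that the zero-extension is still an $H^1$ distributional solution of $-\Delta \widetilde u = \lambda \widetilde u$, and invoke classical weak unique continuation. The only (cosmetic) difference is that the paper performs the extension globally, to a larger Lipschitz domain $\widetilde\Omega \supset \Omega$ whose complement contains an open ball, and applies unique continuation once, whereas you extend locally into a ball around a point of $\Lambda$ and then need a second, purely interior application of unique continuation on the connected domain $\Omega$.
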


\begin{proof}
As $\Lambda$ is relatively open in $\partial \Omega$, we may choose another bounded Lipschitz domain $\widetilde \Omega$ such that $\Omega \subset \widetilde \Omega$, $\widetilde \Omega \setminus \Omega$ contains an open ball $\cO$, and $\partial \Omega \setminus \partial \widetilde \Omega \subset \Lambda$. Then the function $\widetilde u$ obtained from $u$ by extending it by zero to $\widetilde \Omega \setminus \Omega$ belongs to $H^1 (\widetilde \Omega)$ and $- \Delta \widetilde u = \lambda \widetilde u$ holds on $\widetilde \Omega$ in the distributional sense due to $u |_\Lambda = \partial_\nu u |_\Lambda = 0$; cf.\ \cite[Lemma 3.1]{R14}. As $\widetilde u$ vanishes identically on $\cO$, a classical unique continuation statement yields $\widetilde u = 0$ identically in $\widetilde \Omega$, see, e.g., \cite{W93}. Hence, $u = 0$ in $\Omega$ as $\Omega \subset \widetilde \Omega$.
\end{proof}

Finally, the following integration-by-parts formula is crucial for the proof of the main results. In order to formulate it, we introduce the signed curvature of $\partial \Omega$ with respect to the outer unit normal $\nu$, defined at each point of $\partial \Omega$ except at the corners, see for instance \cite[Exercise 8, Section 2.3]{O06} or \cite[Section 2.2]{PR10}. It can be expressed as 
\begin{equation*}
\kappa = \tau' \cdot \nu,
\end{equation*}
where $\tau$ is the unit tangent vector field along the boundary and the derivative $\tau'$ is to be understood piecewise via an arclength parametrization of the boundary in positive direction. Later on we will consider convex domains, for which $\kappa(x) \le 0$ holds for almost all $x \in \partial \Omega$.

\begin{lemma}
\label{lem:Grisvard}
Assume that $\Omega \subset \mathbb{R}^2$ is a bounded Lipschitz domain with piecewise smooth boundary consisting, except for the corners, of finitely many smooth arcs $\Gamma_1, \dots, \Gamma_N$, and let $\kappa$ denote the curvature of $\partial \Omega$ w.r.t.\ the unit normal pointing outwards. Then 
\begin{align*}
\int_{\Omega} (\partial_1^2 u)(\partial_2^2 u)\,\textup{d}x = \int_{\Omega} (\partial_1 \partial_2 u)^2\,\textup{d}x - \frac{1}{2} \int_{\partial \Omega} \kappa \abs{\nabla u}^2 \, \textup{d} \sigma 
\end{align*}
holds for all real-valued $u \in V^2 (\Omega)$, where
\begin{align*}
 V^2 (\Omega) = \Big\{ w \in H^2 (\Omega) : \text{on each}~\Gamma_j,~w |_{\Gamma_j} = 0~\text{or}~\partial_\nu w |_{\Gamma_j} = 0 \Big\},
\end{align*}
i.e.\ for all functions in $H^2 (\Omega)$ which satisfy a Dirichlet or Neumann boundary condition on each smooth arc.
\end{lemma}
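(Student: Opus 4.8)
The plan is to establish the identity first for functions that are smooth up to the boundary and then pass to the limit using the density of such functions in $H^2(\Omega)$, together with the continuity of both sides of the identity with respect to the $H^2$-norm. So the first step is to reduce to $u \in C^\infty(\overline\Omega)$ satisfying a Dirichlet or Neumann condition on each smooth arc $\Gamma_j$; here one has to be slightly careful, since a generic smooth function will not satisfy these boundary conditions, so one should either work with a suitable dense subset of $V^2(\Omega)$ or, more robustly, simply prove the identity for \emph{all} $u\in C^\infty(\overline\Omega)$ with a boundary term $\int_{\partial\Omega}(\partial_1 u\,\partial_2 u)(\nu_1\partial_2 - \nu_2\partial_1)u\,\dd\sigma$ or similar, and then observe that on each arc $\Gamma_j$ this boundary term vanishes once the Dirichlet-or-Neumann condition is imposed. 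I expect the cleanest route is the latter: prove a general integration-by-parts identity valid on $C^\infty(\overline\Omega)$, then specialize.

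The computational heart is two applications of the divergence theorem. Writing $\int_\Omega (\partial_1^2 u)(\partial_2^2 u)\,\dd x$, integrate by parts in $x_1$ to move one $\partial_1$ off the first factor: this produces $-\int_\Omega (\partial_1 u)(\partial_1\partial_2^2 u)\,\dd x + \int_{\partial\Omega}\nu_1(\partial_1 u)(\partial_2^2 u)\,\dd\sigma$. Similarly $\int_\Omega(\partial_1\partial_2 u)^2\,\dd x$, integrated by parts in $x_2$, gives $-\int_\Omega(\partial_1 u)(\partial_1\partial_2^2 u)\,\dd x + \int_{\partial\Omega}\nu_2(\partial_1\partial_2 u)(\partial_1 u)\,\dd\sigma$. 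Subtracting, the interior terms cancel and one is left with
\begin{align*}
\int_\Omega(\partial_1^2 u)(\partial_2^2 u)\,\dd x - \int_\Omega(\partial_1\partial_2 u)^2\,\dd x = \int_{\partial\Omega}(\partial_1 u)\big(\nu_1\partial_2^2 u - \nu_2\partial_1\partial_2 u\big)\,\dd\sigma.
\end{align*}
The remaining task is to identify the right-hand side with $-\tfrac12\int_{\partial\Omega}\kappa\,|\nabla u|^2\,\dd\sigma$ once the boundary conditions are in force. To do this I would decompose $\nabla u$ on each arc into its tangential and normal components, $\nabla u = (\tau\cdot\nabla u)\tau + (\nu\cdot\nabla u)\nu$, and use that along $\Gamma_j$ either $u|_{\Gamma_j}=0$, forcing $\tau\cdot\nabla u=0$ there (so $\nabla u = (\partial_\nu u)\nu$), or $\partial_\nu u|_{\Gamma_j}=0$, so $\nabla u=(\tau\cdot\nabla u)\tau$. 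In either case $|\nabla u|^2$ equals the square of the single surviving component, and the second-derivative combination $\nu_1\partial_2^2 u-\nu_2\partial_1\partial_2 u$ can be rewritten, using the Frenet-type relations $\tau'=\kappa\nu$ and $\nu'=-\kappa\tau$ along the arclength parametrization, in terms of tangential derivatives of the first derivatives of $u$; integrating the resulting total tangential derivative by parts along each arc (a closed curve, so no endpoint contributions from a single smooth arc beyond the corner matching, which telescopes to zero around $\partial\Omega$) produces exactly the curvature term with the factor $-\tfrac12$. This is essentially Grisvard's polygonal computation (\cite[Lemma 4.3.1.2 and the surrounding discussion]{G85}) with the curvature-zero simplification removed; on a polygon $\kappa\equiv 0$ on each side, and the corner contributions are what Grisvard tracks, whereas here the arcs are curved but each still carries a Dirichlet or Neumann condition, so the corner terms cancel in the same telescoping fashion and only the distributed curvature survives.

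The main obstacle I anticipate is the careful bookkeeping of the boundary integral: one must handle the two cases (Dirichlet vs.\ Neumann on a given arc) uniformly, correctly express the tangential derivative of $\partial_i u$ along a curved arc in terms of $\kappa$ and the Hessian, and verify that the corner contributions cancel when summing over $\Gamma_1,\dots,\Gamma_N$ — this last point uses that the tangent vector $\tau$ has matching one-sided limits only up to a jump at each corner, but the quantity being differentiated, being built from $\nabla u$ of an $H^2\cap C^\infty$ function, is continuous across corners, so the telescoping sum of one-sided boundary values closes up. A secondary technical point is the density/approximation step: $C^\infty(\overline\Omega)\cap V^2(\Omega)$ need not be dense in $V^2(\Omega)$ in an obvious way when $\partial\Omega$ is merely piecewise smooth, so it is safer to prove the $C^\infty(\overline\Omega)$ identity \emph{with} the general boundary term and only afterwards restrict to $V^2(\Omega)$, where the general boundary term collapses to the curvature term; then a single approximation of an arbitrary $u\in V^2(\Omega)$ by $C^\infty(\overline\Omega)$ functions (not required to satisfy the boundary conditions) in the $H^2$-norm suffices, because one applies the general identity to the approximants and passes to the limit, the general boundary term being continuous in $u$ with respect to $\|\cdot\|_{H^2(\Omega)}$ via the trace theorem, and the limit's boundary term then being recognized as the curvature term since the limit $u$ lies in $V^2(\Omega)$.
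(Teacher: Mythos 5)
Your proposal follows essentially the same route as the paper's proof: two integrations by parts reduce the difference of the two volume integrals to the boundary term $\int_{\partial \Omega} (\partial_1 u)\, \partial_\tau (\partial_2 u) \, \dd \sigma$, which is then symmetrized and rewritten via the tangential/normal decomposition of $\nabla u$ and the Frenet-type relations so that, under the Dirichlet-or-Neumann condition on each arc, only the curvature term survives, with a density argument completing the proof. The only inessential difference is organizational: the paper approximates $\nabla u$ in $H^1(\Omega,\R^2)$ by smooth vector fields supported away from the corners and passes to the limit term by term in the already decomposed boundary integral, rather than carrying a general boundary term for smooth $H^2$-approximants of $u$ itself and identifying it only in the limit.
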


\begin{proof}
Let first $\mathbf{u} = (u_1, u_2)^\top \in C^\infty (\overline \Omega, \, \mathbb{R}^2)$ be a vector field whose components have compact supports in $\overline{\Omega}$ which do not contain any corner of $\partial \Omega$; up to an approximation, $\mathbf{u}$ will later play the role of $\nabla u$. Two consecutive applications of integration by parts yield
\begin{align}
\label{eq:GrisvardLemmaBP}
\begin{split}
 \int_\Omega (\partial_{1} u_1) (\partial_{2} u_2 ) \, \textup{d} x 
 & = - \int_\Omega u_1 \partial_1 \partial_{2} u_2 \, \textup{d} x + \int_{\partial \Omega} u_1 \nu_1 \partial_{2} u_2 \, \textup{d} \sigma \\ 
  & = \int_\Omega (\partial_{2} u_1) (\partial_{1} u_2) \, \textup{d} x + \int_{\partial \Omega} ( u_1 \nu_1 \partial_{2} u_2 - u_1 \nu_2 \partial_{1} u_2 ) \, \textup{d} \sigma \\
 & = \int_\Omega (\partial_{2} u_1) (\partial_{1} u_2) \, \textup{d} x + \int_{\partial \Omega} u_1 \partial_\tau u_2 \, \textup{d} \sigma, 
\end{split}
\end{align}
where $\partial_\tau$ denotes the derivative in the direction of the tangential vector field $\tau$ on $\partial \Omega$. In the following, we use the identities $\nu_1 \tau_2 - \nu_2 \tau_1 = 1$ and
\begin{align*}
 u_j = \ee_j \cdot \mathbf{u} = (\tau \cdot \ee_j ) \, (\tau \cdot \mathbf{u}) + (\nu \cdot \ee_j) \, (\nu \cdot \mathbf{u}) = \tau_j \, (\tau \cdot \mathbf{u}) + \nu_j \, (\nu \cdot \mathbf{u}),
\end{align*}
which holds for each standard basis vector $\ee_j$ everywhere on $\partial \Omega$ except at the corners. Inspired by the proof of \cite[Proposition 2.1]{LW86}, we rewrite the integrand in the boundary integral in \eqref{eq:GrisvardLemmaBP},
\begin{align*}
u_1 \partial_\tau u_2
 & = \frac{1}{2} \big( \partial_\tau (u_1 u_2) - (\partial_\tau u_1) u_2 \big) + \frac{1}{2} u_1 \partial_\tau u_2 \\ 
 & = \frac{1}{2} \partial_\tau (u_1 u_2) + \frac{1}{2} \Big( \big(\tau_1 (\tau \cdot \mathbf{u}) + \nu_1 (\nu \cdot \mathbf{u}) \big) \partial_\tau \big(\tau_2 (\tau \cdot \mathbf{u})  + \nu_2 (\nu \cdot u)\big) \\
 & \quad - \big(\tau_2 (\tau \cdot \mathbf{u}) + \nu_2 (\nu \cdot \mathbf{u}) \big) \partial_\tau \big(\tau_1 (\tau \cdot \mathbf{u})  + \nu_1 (\nu \cdot \mathbf{u}) \big) \Big) \\
 & = \frac{1}{2} \partial_\tau (u_1 u_2) + \frac{1}{2} \big((\nu \cdot \mathbf{u}) \partial_\tau (\tau \cdot \mathbf{u}) - (\tau \cdot \mathbf{u}) \partial_\tau (\nu \cdot \mathbf{u}) \big) \\
 & \quad + \frac{1}{2} \Big( \big(\tau_1 (\tau \cdot \mathbf{u}) + \nu_1 (\nu \cdot \mathbf{u})\big) \big(\tau_2' (\tau \cdot \mathbf{u}) + \nu_2' (\nu \cdot \mathbf{u})\big) \\
 & \quad - \big(\tau_2 (\tau \cdot \mathbf{u}) + \nu_2 (\nu \cdot \mathbf{u})\big) \big(\tau_1' (\tau \cdot \mathbf{u}) + \nu_1' (\nu \cdot \mathbf{u}) \big) \Big),
\end{align*}
which holds everywhere on $\partial \Omega$, since the supports of $u_1$ and $u_2$ do not contain any corners. From the relations 
\begin{align*}
\tau_1 \tau_2' - \tau_2 \tau_1' = - \kappa = \nu_1 \nu_2' - \nu_2 \nu_1'
\end{align*}
and 
\begin{align*}
\nu_1 \tau_2' - \nu_2 \tau_1' = 0 = \tau_1 \nu_2' - \tau_2 \nu_1'
\end{align*}
we finally obtain
\begin{align*}
u_1 \partial_\tau u_2 & = \frac{1}{2} \partial_\tau (u_1 u_2) - \frac{\kappa}{2} \left( (\tau \cdot \mathbf{u})^2 + (\nu \cdot \mathbf{u})^2 \right) \\ & \quad + \frac{1}{2} \big((\nu \cdot \mathbf{u}) \partial_\tau (\tau \cdot \mathbf{u}) - (\tau \cdot \mathbf{u}) \partial_\tau (\nu \cdot \mathbf{u}) \big)
\end{align*}
everywhere on $\partial \Omega$. Integrating over the closed boundary curve, the first term on the right-hand side vanishes and we get
\begin{align*}
 \int_{\partial \Omega} u_1 \partial_\tau u_2 \,\textup{d} \sigma 
 & = - \frac{1}{2} \int_{\partial \Omega} \kappa \abs{\mathbf{u}}^2 \, \textup{d} \sigma \\
& \phantom{=} + \frac{1}{2} \sum_{j=1}^N \int_{\Gamma_j} \big((\nu \cdot \mathbf{u}) \partial_\tau (\tau \cdot \mathbf{u}) - (\tau \cdot \mathbf{u}) \partial_\tau (\nu \cdot \mathbf{u}) \big) \, \textup{d}\sigma
\end{align*}
which plugged into \eqref{eq:GrisvardLemmaBP} yields 
\begin{align}
\label{eq:GrisvardLemmaAlmost}
\begin{split}
\int_\Omega (\partial_{1} u_1 ) (\partial_{2} u_2 ) \, \textup{d} x  = & \int_\Omega (\partial_{2} u_1) (\partial_{1} u_2) \, \textup{d} x - \frac{1}{2} \int_{\partial \Omega} \kappa \abs{\mathbf{u}}^2 \, \textup{d} \sigma \\
& \phantom{=} + \frac{1}{2} \sum_{j=1}^N \int_{\Gamma_j} \big((\nu \cdot \mathbf{u}) \partial_\tau (\tau \cdot \mathbf{u}) - (\tau \cdot \mathbf{u}) \partial_\tau (\nu \cdot \mathbf{u}) \big) \, \textup{d}\sigma
\end{split}
\end{align}
for all $\mathbf{u} = (u_1, u_2)^\top \in C^\infty (\overline \Omega, \, \R^2)$ whose supports do not contain any corners of~$\partial \Omega$. 

Now if $u \in V^2(\Omega)$, then $\nabla u \in H^1(\Omega, \, \R^2)$ and, further, $u$ satisfies a Dirichlet or a Neumann boundary condition on each smooth arc of $\partial \Omega$, so that in almost each point of $\partial \Omega$ either the tangential or the normal derivative of $u$ respectively vanish. Hence, $(\partial_\nu u)(\partial_\tau \partial_\tau u) = (\partial_\tau u)(\partial_\tau \partial_\nu u) = 0$ constantly for each $j=1, \ldots, N$. By a capacity argument the vector fields in $C^\infty (\overline{\Omega}, \, \R^2)$ whose supports do not contain the corners of $\partial \Omega$ are dense in $H^1(\Omega, \, \R^2)$, see \cite[Chapter 8, Corollary 6.4]{EE18}. Hence there exists a sequence $(\mathbf{u}^{(k)})_k$ in $C^\infty (\overline{\Omega}, \, \R^2)$ of vector fields whose supports do not contain the corners of $\partial \Omega$ such that $\mathbf{u}^{(k)} \to \nabla u$ in $H^1(\Omega, \, \R^2)$ as $k \to \infty$. In particular,  one has
\begin{align*}
(\nu \cdot \mathbf{u}^{(k)}) \partial_\tau (\tau \cdot \mathbf{u}^{(k)}) \to 0 \quad \text{and} \quad (\tau \cdot \mathbf{u}^{(k)}) \partial_\tau (\nu \cdot \mathbf{u}^{(k)}) \to 0
\end{align*}
in $L^1(\Gamma_j)$, $j = 1, \dots, N$, as well as
\begin{align*}
 \mathbf{u}^{(k)} |_{\partial \Omega} \to \nabla u |_{\partial \Omega}
\end{align*}
in $H^{1/2} (\partial \Omega)$. Therefore, from \eqref{eq:GrisvardLemmaAlmost} we obtain
\begin{align*}
 \int_{\Omega} (\partial_1^2 u)(\partial_2^2 u)\,\textup{d}x & = \lim_{k \to \infty}  \int_\Omega (\partial_{1} \mathbf{u}^{(k)}_1 ) (\partial_{2} \mathbf{u}^{(k)}_2 ) \, \textup{d} x \\
 & = \lim_{k \to \infty} \int_\Omega (\partial_{2} \mathbf{u}^{(k)}_1) (\partial_{1} \mathbf{u}^{(k)}_2) \, \textup{d} x - \frac{1}{2} \lim_{k \to \infty} \int_{\partial \Omega} \kappa \abs{\mathbf{u}^{(k)}}^2 \, \textup{d} \sigma \, \textup{d} \sigma \\
 & \phantom{=} + \frac{1}{2} \sum_{j=1}^N \lim_{k \to \infty}  \int_{\Gamma_j} \big((\nu \cdot \mathbf{u}^{(k)}) \partial_\tau (\tau \cdot \mathbf{u}^{(k)}) - (\tau \cdot \mathbf{u}^{(k)}) \partial_\tau (\nu \cdot \mathbf{u}^{(k)}) \big) \, \textup{d}\sigma \\
 & = \int_{\Omega} (\partial_1 \partial_2 u)^2\,\textup{d}x - \frac{1}{2} \int_{\partial \Omega} \kappa \abs{\nabla u}^2 \, \textup{d} \sigma.
\end{align*}
This completes the proof.
\end{proof}

We proceed to the proof of the main results.

\begin{proof}[Proof of Theorem~\ref{thm:mainthm1} and Theorem \ref{thm:mainthm2}]
We prove the two theorems simultaneously and divide the proof into three steps. 

{\bf Step 1.} This step contains some preparations. Let $\Gamma' \subset \partial \Omega$ be an open straight line segment and $\Gamma \subset \partial \Omega$ be relatively open and connected such that $\Gamma \cap \Gamma' = \emptyset$ and the interior angles of $\partial \Omega$ at both end points of $\Gamma$ are strictly less than $\pi/2$. We denote by $b$ the constant outer unit normal vector of $\Gamma'$. Let $u$ be a real-valued eigenfunction of $-\Delta_{\Gamma}$ corresponding to the eigenvalue $\lambda_1^{\Gamma}$, and let $v = b \cdot \nabla u$, the directional derivative of $u$ in the direction of $b$. As the angles adjacent to $\Gamma$ are less than $\pi/2$ and $\Omega$ is convex, it follows from Proposition \ref{prop:regularity} that $v \in H^1(\Omega)$. Moreover, the Neumann boundary condition imposed on $u$ on $\Gamma'$ implies $v |_{\Gamma'} = 0$, i.e.\ $v \in H_{0, \Gamma'}^1 (\Omega)$. Note that $v$ is non-trivial since $b \cdot \nabla u=0$ identically on $\Omega$ together with $u=0$ on $\Gamma$ would imply $u=0$ on $\Omega$, since $\Gamma$ is not a straight line segment orthogonal to $\Gamma'$, due to the angle requirement in the theorems and convexity of $\Omega$.

Our first aim is to prove that
\begin{align}
\label{eq:almost}
 \frac{\int_\Omega |\nabla v|^2 \, \textup{d} x}{\int_\Omega |v|^2 \,\textup{d} x} & \leq \lambda_1^\Gamma.
\end{align}
First of all, integration by parts yields
\begin{align}
\label{eq:half}
\begin{split}
\lambda_1^\Gamma \int_\Omega |v|^2 \, \textup{d} x & =  \lambda_1^\Gamma \int_\Omega \nabla u \cdot b b^\top \nabla u \, \textup{d} x \\
& = \lambda_1^\Gamma \left( - \int_\Omega u \diver \left(b b^\top \nabla u \right) \, \textup{d} x + \int_{\partial \Omega} u b b^\top \nabla u \cdot \nu \, \textup{d} \sigma \right) \\
& = \int_\Omega \Delta u \diver \left(b b^\top \nabla u \right) \, \textup{d} x + \lambda_1^\Gamma \int_{\partial \Omega} u (b \cdot \nabla u)(b \cdot \nu) \, \textup{d} \sigma.
\end{split}
\end{align}
The domain integral on the right-hand side may be rewritten with the help of Lemma~\ref{lem:Grisvard},
\begin{align}
\label{eq:otherhalf}
\begin{split}
 \int_\Omega \Delta u \diver \left(b b^\top \nabla u \right) \, \textup{d} x & = \int_\Omega (\Delta u ) \left( b_1^2 \partial_1^2 u + 2 b_1 b_2 \partial_1 \partial_2 u + b_2^2 \partial_2^2 u \right) \, \textup{d} x \\
 & = \int_\Omega b_1^2 |\nabla \partial_1 u|^2 \, \textup{d} x + 2 \int_\Omega b_1 b_2 \nabla \partial_1 u \cdot \nabla \partial_2 u \, \textup{d} x \\
 & \quad + \int_\Omega b_2^2 |\nabla \partial_2 u|^2 \, \textup{d} x - \frac{1}{2} \left(b_1^2 + b_2^2 \right) \int_{\partial \Omega} \kappa |\nabla u|^2 \, \textup{d} \sigma \\
 & = \int_\Omega |\nabla v|^2 \, \textup{d} x - \frac{1}{2}  \int_{\partial \Omega} \kappa |\nabla u|^2 \, \textup{d} \sigma.
\end{split}
\end{align}
By plugging \eqref{eq:otherhalf} into \eqref{eq:half} we obtain 
\begin{align}
\label{eq:fundeq}
\lambda_1^\Gamma \int_\Omega |v|^2 \, \textup{d} x = \int_\Omega |\nabla v|^2 \, \textup{d} x +  \lambda_1^\Gamma \int_{\partial \Omega} u (b \cdot \nabla u)(b \cdot \nu) \, \textup{d} \sigma - \frac{1}{2}  \int_{\partial \Omega} \kappa |\nabla u|^2 \, \textup{d} \sigma.
\end{align}

{\bf Step 2.} We will now prove Theorem~\ref{thm:mainthm1}. Indeed, if $\overline{\Gamma} \cup \overline{\Gamma'} = \partial \Omega$ then the first boundary integral vanishes as $u$ vanishes on $\Gamma$ and the normal derivative $\partial_\nu u = \nu \cdot \nabla u$ vanishes on $\Gamma'$, where $\nu=b$ constantly. The convexity of $\Omega$ implies $\kappa \leq 0$ almost everywhere on $\partial \Omega$, therefore we obtain 
\begin{align}
\label{eq:yes!}
\begin{split}
 \lambda_1^{\Gamma'} \leq \frac{\int_\Omega |\nabla v|^2 \, \textup{d} x}{\int_\Omega |v|^2 \, \textup{d} x} \leq \frac{\int_\Omega |\nabla v|^2 \, \textup{d} x - \frac{1}{2}  \int_{\partial \Omega} \kappa |\nabla u|^2 \, \textup{d} \sigma}{\int_\Omega |v|^2 \, \textup{d} x} = \lambda_1^\Gamma.
\end{split}
\end{align}
In particular, \eqref{eq:almost} follows under the assumptions of Theorem~\ref{thm:mainthm1}. From this and the variational principle \eqref{eq:variational} we obtain the inequality $\lambda_1^{\Gamma'} \leq \lambda_1^\Gamma$. 

To conclude the proof of Theorem~\ref{thm:mainthm1}, we distinguish two cases. First, assume that there exists a non-corner point of the boundary where $\partial \Omega$ has non-zero curvature. As $\Omega$ is convex and $\partial \Omega$ is piecewise smooth, this implies the existence of a relatively open subset $\Lambda \subset \partial \Omega$ on which $\kappa$ is uniformly negative, and in fact $\Lambda \subset \Gamma$ as $\Gamma'$ is a straight line segment. If we assume for a contradiction that $\lambda_1^{\Gamma'} = \lambda_1^\Gamma$ then, by \eqref{eq:yes!}, 
\begin{align}\label{eq:curvatureZero}
 \int_{\partial \Omega} \kappa |\nabla u|^2 \, \textup{d} \sigma = 0,
\end{align}
which in particular implies $\nabla u = 0$ almost everywhere on $\Lambda$. Especially, both $u$ and $\partial_\nu u$ vanish on $\Lambda$, and Lemma \ref{lem:continuation principle} implies $u = 0$ constantly in $\Omega$, a contradiction. 

In the second case $\partial \Omega$ is piecewise straight, i.e. $\Omega$ is a polygon. Then $\Gamma$ consists of finitely many relatively open straight line segments. We denote by $\Gamma_1$ the segment which precedes $\Gamma'$ according to positive orientation of the boundary, and by $P$ the end point shared by $\Gamma'$ and $\Gamma_1$, as given in Figure~\ref{fig:strictPolygon}. Recall that the interior angle of $\partial \Omega$ at which $\Gamma'$ and $\Gamma_1$ meet is by assumption strictly less than $\pi/2$, and that $b$ is the outer unit normal vector to any point of $\Gamma'$.
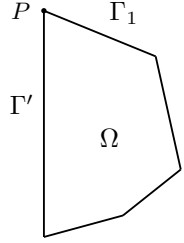
\begin{figure}[h]
\begin{tikzpicture}[scale=0.5]
\pgfsetlinewidth{0.7pt}
\node[circle,fill=black,inner sep=0pt,minimum size=2pt,label=left:{{\small $P$}}] (P) at (4,7) {};
\node[white] (Q) at (4,1) {};
\node[white] (R) at (6.08,1.57) {};
\node[white] (S) at (7.6,2.79) {};
\node[white] (T) at (6.94,5.79) {};
\draw (P.center) to node[black][above left]{$\Gamma'$} (Q.center);
\draw (Q.center) to (R.center);
\draw (R.center) to (S.center);
\draw (S.center) to (T.center);
\draw (T.center) to node[black][above right]{$\Gamma_1$}(P.center);
\node at (barycentric cs:P=1,Q=1,R=1,S=1,T=1) {$\Omega$};
\end{tikzpicture}
\caption{The positions of $\Gamma_1$ and $P$ as used in Step 2 of the proof.}
\label{fig:strictPolygon}
\end{figure}
At almost each point of $\Gamma_1$, we can express the vector $b$ as a linear combination of the unit vectors $\tau$ and $\nu$ which are tangential respectively normal to $\Gamma_1$,
\begin{equation}
\label{eq:bLinearComb}
 b = (b \cdot \tau) \, \tau + (b \cdot \nu) \, \nu,
\end{equation}
so that on $\Gamma_1$ we obtain
\begin{align}
\label{eq:reint2}
v = b \cdot \nabla u = (b \cdot \tau) \, \tau \cdot \nabla u + (b \cdot \nu) \, \nu \cdot \nabla u = (b \cdot \tau) \, \partial_\tau u + (b \cdot \nu) \, \partial_\nu u.
\end{align}
From this and the fact that $0 = - \lambda_1^\Gamma u = \Delta u = \partial_\tau \partial_\tau u + \partial_\nu \partial_\nu u = \partial_\nu \partial_\nu u$ on $\Gamma_1$, we get
\begin{align}
\label{eq:reint4}
\begin{split}
 \partial_\nu v & =  (b \cdot \tau) \, \partial_\nu \partial_\tau u + (b \cdot \nu) \, \partial_\nu \partial_\nu u =  (b \cdot \tau) \, \partial_\nu \partial_\tau u \quad \text{on}~\Gamma_1.
\end{split}
\end{align}
Now, for a contradiction, assume that $\lambda_1^{\Gamma'} = \lambda_1^\Gamma$ holds. By \eqref{eq:yes!} this implies that $v = b \cdot \nabla u$ is an eigenfunction of $- \Delta_{\Gamma'}$ corresponding to $\lambda_1^{\Gamma'}$. Then $\partial_\nu v =0$ on $\Gamma$ and in particular on $\Gamma_1$, so that the left-hand side of \eqref{eq:reint4} vanishes on $\Gamma_1$. As $\Gamma_1$ and $\Gamma'$ cannot be parallel, we have $b \cdot \tau \neq 0$ on $\Gamma_1$, and \eqref{eq:reint4} yields that $0 = \partial_\nu \partial_\tau u = \partial_\tau \partial_\nu u$ on $\Gamma_1$, i.e., there exists a constant $c \in \mathbb{R}$ such that on $\Gamma_1$
\begin{equation}
\label{eq:normalDerConstant}
\partial_\nu u = c.
\end{equation}
Moreover, the corner $P$ is a critical point of $u$ as $u$ satisfies a Neumann boundary condition on $\Gamma'$ and a Dirichlet boundary condition on $\Gamma_1$ and the two sides are not perpendicular. Thus $\nabla u(P) =  0$, which combined with \eqref{eq:normalDerConstant} gives $\partial_\nu u = 0$ on $\Gamma_1$. Since $u=0$ on $\Gamma_1$, Lemma~\ref{lem:continuation principle} yields $u = 0$ constantly in $\Omega$, a contradiction. We have thus proven Theorem~\ref{thm:mainthm1}.

{\bf Step 3.} Now we turn to the proof of Theorem \ref{thm:mainthm2} based on \eqref{eq:fundeq}. In fact, as $\kappa (x) \leq 0$ for almost all $x \in \partial \Omega$, inequality \eqref{eq:almost} follows if we can show that
\begin{align}
\label{eq:done}
\int_{\partial \Omega} u (b \cdot \nabla u)(b \cdot \nu) \, \textup{d} \sigma \geq 0.
\end{align}
Note first that the integrand vanishes on $\Gamma$ as $u |_\Gamma = 0$ constantly. To estimate the integral over the remainder of the boundary, we will make use of the notation introduced in Section~\ref{sec:mainresults}. Recall that $\partial \Omega \setminus \Gamma$ consists, except for the corners, of finitely many smooth arcs $\Sigma_1, \dots, \Sigma_N$ enumerated following the boundary in positive orientation from one end point of $\Gamma$ to the other, and that we denote the corner points adjacent to each $\Sigma_j$ by $P_j$ and $P_{j-1}$, see Figure~\ref{fig:notationHelp}. Moreover, we define the functions
\begin{align*}
 t_j  \coloneqq (b \cdot \tau)(b \cdot \nu)|_{\Sigma_j}, \quad j = 1, \dots, N;
\end{align*}
note that for the index $j$ for which $\Sigma_j$ equals $\Gamma'$ we have $t_j = 0$ identically as $b$ is normal to $\Gamma'$, and that $t_j$ is constant if $\Sigma_j$ is a straight line segment.

At almost each point of $\partial \Omega \setminus \Gamma$ we can express the vector $b$ as a linear combination of $\tau$ and $\nu$ as in \eqref{eq:bLinearComb} so that on $\partial \Omega \setminus \Gamma$, by the Neumann boundary condition imposed on $u$, \eqref{eq:reint2} becomes
\begin{align*}
b \cdot \nabla u = (b \cdot \tau) \, \partial_\tau u + (b \cdot \nu) \, \partial_\nu u = (b \cdot \tau) \,\partial_\tau u.
\end{align*}
Inserting this expression into the integrand of \eqref{eq:done} and integrating over any arc~$\Sigma_j$, $j = 1, \ldots, N$, by the fundamental theorem of calculus we get
\begin{align*}
 \int_{\Sigma_j} u (b \cdot \nabla u)(b \cdot \nu) \, \textup{d} \sigma
& = \int_{\Sigma_j} (b \cdot \tau) (b \cdot \nu) u \partial_\tau u \, \textup{d} \sigma \\
& = \frac{1}{2} \int_{\Sigma_j} t_j \partial_\tau (u^2) \, \textup{d} \sigma \\
& = \frac{1}{2} \bigg( - \int_{\Sigma_j} u^2 \partial_\tau t_j \, \textup{d} \sigma + t_j (P_j) u^2 (P_j) - t_j (P_{j-1}) u^2 (P_{j-1}) \bigg).
\end{align*}
Using this and $u = 0$ on $\Gamma$, in particular $u (P_0) = u (P_N) = 0$, we obtain
\begin{align}
\label{eq:decomposed}
\begin{split}
 \int_{\partial \Omega} u (b \cdot \nabla u)(b \cdot \nu) \, \textup{d} \sigma
 & = \frac{1}{2} \sum_{j=1}^{N} \bigg( - \int_{\Sigma_j} u^2 \partial_\tau t_j \, \textup{d} \sigma \\
 & \quad + t_j (P_j) u^2 (P_j) - t_j (P_{j-1}) u^2 (P_{j-1}) \bigg) \\
& = \frac{1}{2} \bigg( - \sum_{j=1}^{N}  \int_{\Sigma_j} u^2 \partial_\tau t_j \, \textup{d} \sigma + \sum_{j=1}^{N - 1} [t_j - t_{j + 1}] (P_j) u^2 (P_j) \bigg).
\end{split}
\end{align}
We can now conclude that this boundary integral is non-negative. Indeed, by the assumption \eqref{eq:miraculous} of the theorem, the almost-everywhere defined function $(b \cdot \tau)(b \cdot \nu)$ is non-increasing along $\partial \Omega \setminus \Gamma$, that is
\begin{align*}
 \partial_\tau t_j \leq 0 \quad \text{on}~\Sigma_j, \quad j = 1, \dots, N,
\end{align*}
and $[t_j - t_{j + 1}] (P_j) \geq 0$ at each corner $P_j$, $j = 1, \dots, N - 1$. Hence \eqref{eq:done} follows from \eqref{eq:decomposed}. With \eqref{eq:fundeq} this proves the inequality
\begin{equation*}
\lambda^{\Gamma'}_1 \le \lambda^{\Gamma}_1.
\end{equation*}

Next, we argue that this inequality is actually strict. Assume for a contradiction that equality holds. Then the above considerations imply that both \eqref{eq:curvatureZero} and
\begin{align}
\label{eq:boundIntZero}
 - \sum_{j=1}^{N}  \int_{\Sigma_j} u^2 \partial_\tau t_j \, \textup{d} \sigma + \sum_{j=1}^{N - 1} [t_j - t_{j + 1}] (P_j) u^2 (P_j) = 0
\end{align}
hold. Note that since the function $(b \cdot \tau)(b \cdot\nu)$ is non-increasing, \eqref{eq:boundIntZero} holds if and only if each summand is equal to zero separately. We distinguish three cases.

In the first case, assume that $\Gamma$ is not a straight line segment, i.e., there exists a relatively open subset $\Lambda \subset \Gamma$ on which $\kappa$ is non-zero. In this case we can argue the same way as above to lead \eqref{eq:curvatureZero} to a contradiction.

In the second case, assume that $\Gamma$ is a straight line segment which is not parallel to $\Gamma'$. Let $j$ be such that $\Sigma_j$ is adjacent to $\Gamma'$. Also in this case we can argue analogously to the above reasoning and show that \eqref{eq:normalDerConstant} holds on the segment $\Sigma_j$. The end point shared by $\Gamma$ and $\Sigma_j$, which meet at an angle strictly less than $\pi/2$, is again a critical point of $u$ as $u$ satisfies a Dirichlet boundary condition on $\Gamma$ and a Neumann boundary condition on $\Sigma_j$. This shows that $\partial_\nu u = 0$ on $\Gamma$, which together with Lemma~\ref{lem:continuation principle} yields a contradiction.

Let us finally consider the third case, in which $\Gamma$ is a line segment parallel to $\Gamma'$. In particular, $\Gamma$ and $\Gamma'$ are not adjacent to each other. The equation \eqref{eq:boundIntZero} together with the earlier considerations yields 
\begin{align*}
 [t_j - t_{j + 1}] (P_j) u^2 (P_j) = 0, \qquad j = 1, \dots, N - 1.
\end{align*}
Let now $j$ be such that $P_j$ is the end point of $\Gamma'$. Due to the convexity of $\Omega$ and the requirements on the angles adjacent to $\Gamma$ in the theorem, the interior angle of $\partial \Omega$ at $P_j$ must be strictly larger than $\pi/2$, and, in particular, $t_j (P_j) - t_{j + 1} (P_j) \neq 0$ so that $u(P_j)=0$ must hold. Note that $\lambda_1^{\Gamma'} = \lambda_1^\Gamma$ and \eqref{eq:almost} yield
\begin{align*}
\lambda_1^\Gamma = \lambda_1^{\Gamma'} = \frac{\int_\Omega |\nabla v|^2 \, \textup{d} x}{\int_\Omega |v|^2 \, \textup{d} x},
\end{align*}
which in turn implies that $v$ is an eigenfunction of $- \Delta_{\Gamma'}$ corresponding to $\lambda_1^{\Gamma'}$. By Courant's nodal theorem (see for instance \cite[Theorem 2]{PL56}), $v=b \cdot \nabla u$ must then be either strictly positive or strictly negative inside $\Omega$, which implies that $u$ must be either strictly increasing or strictly decreasing inside $\Omega$ in the direction of the vector $b$. However, the straight line parallel to $b$ through $P_j$ intersects $\partial \Omega$, except for $P_j$, at a point in $\Gamma$, where $u$ satisfies a Dirichlet boundary condition. But the function $u$ vanishes at both its intersection points with $\partial \Omega$, a contradiction. This concludes the proof of Theorem~\ref{thm:mainthm2}.
\end{proof}

\section*{Acknowledgements}
The authors gratefully acknowledge financial support by the grant no.\ 2018-04560 of the Swedish Research Council (VR).

\end{document}